\documentclass [12pt]{article}
\usepackage{amsfonts}
\usepackage{amsthm}
\usepackage{CJK}
\usepackage{amssymb}
\usepackage{amsmath}    
\usepackage{graphicx}
\usepackage{floatrow}   
\usepackage{verbatim}   
\usepackage{color}      
\usepackage{subfigure}
\usepackage{epstopdf} 
\usepackage{epsfig}
\usepackage{subfigure}
\usepackage{float}
\usepackage{cases}
\usepackage{threeparttable}
\usepackage{caption}
\usepackage{bm}

\topmargin-.5in \textheight9in \oddsidemargin0in \textwidth6.5in



\newtheorem{theorem}{Theorem}[section]

\newtheorem{prop}[theorem]{Proposition}

\theoremstyle{definition}
\newtheorem{remark}{Remark}[section]
\newtheorem{example}[remark]{Example}
\numberwithin{equation}{section}

\newcommand{\norm}[1]{\left\Vert#1\right\Vert}
\newcommand{\abs}[1]{\left\vert#1\right\vert}

\newcommand{\average}[1]{\ensuremath{\{\!\!\{#1\}\!\!\}} }
\newcommand{\jump}[1]{\ensuremath{[\![#1]\!]} }

\begin{document}

\title{Discontinuous Galerkin methods for the Ostrovsky-Vakhnenko equation}
\date{}
\author{Qian Zhang\thanks{School of Mathematical Sciences, University of Science and Technology of China, Hefei, Anhui 230026, P.R. China.  E-mail: gelee@mail.ustc.edu.cn.}
\and Yinhua Xia\thanks{Corresponding author. School of Mathematical Sciences, University of Science and Technology of China, Hefei, Anhui 230026, P.R. China.  E-mail: yhxia@ustc.edu.cn. Research supported by NSFC grant  11871449, and a grant from Laboratory of Computational Physics (No. 6142A0502020817).}
}

\maketitle

\begin{abstract}
In this paper, we develop discontinuous Galerkin (DG) methods for the Ostrovsky-Vakhnenko (OV) equation, which yields the shock solutions and singular soliton solutions, such as peakon, cuspon and loop solitons.  The OV equation has also been shown to have a bi-Hamiltonian structure. We directly develop the energy stable or Hamiltonian conservative discontinuous Galerkin (DG) schemes for the OV equation. Error estimates for the two energy stable schemes are  also  proved.  For some singular solutions, including cuspon and loop soliton solutions, the hodograph transformation is adopted to transform the OV equation or the generalized OV system to the coupled dispersionless (CD) system.   Subsequently, two DG schemes are constructed for the transformed CD system. Numerical experiments are provided to demonstrate the accuracy and capability of the DG schemes, including shock solution and, peakon, cuspon and loop soliton solutions.
\end{abstract}
\vskip 4 cm

\textbf{Key Words}: Discontinuous Galerkin methods, Ostrovsky-Vakhnenko equation, energy stable,
Hamiltonian conservative, hodograph transformation, coupled dispersionless system.

\section{Introduction}
In this paper, we consider the initial value problem of the Ostrovsky-Vakhnenko equation
\begin{equation}\label{Eqn:OV}
\begin{cases}
(u_t + uu_x)_x + \gamma u = 0, \ x \in I = [a,b],\ t>0, \\
u(x,0) = u_0(x).
\end{cases}
\end{equation}
which can be viewed as a particular limit of the generalized Korteweg-de Vries (KdV) equation
\begin{equation}\label{Eqn:GKDV}
(u_t + uu_x + \beta u_{xxx})_x + \gamma u  = 0.
\end{equation}
This equation \eqref{Eqn:GKDV} was derived in \cite{Hunter1990_LAM} as a model to describe the small-amplitude long waves on a shallow rotating fluid. Concerning the structure of this equation, it has a purely dispersive term. Although it has the same nonlinear term of the KdV equation, the dispersive terms are different. When $\beta = 0 $, there is no high-frequency dispersion. In \cite{Vakhnenko_1992_JPA}, Vakhnenko uses \eqref{Eqn:OV} to describe high frequency waves in a relaxing medium. In a series of papers \cite{Vakhnenko_1992_JPA, Parkes_1993_JPA, Vakhnenko_1998_non}, its integrability was established by deriving explicit solutions. It is known under different names in some literatures, such as the reduced Ostrovsky equation, the Ostrovsky-Hunter equation, the short-wave equation and the Vakhnenko equation, in this paper, we call \eqref{Eqn:OV} as Ostrovsky-Vakhnenko (OV) equation. The Ostrovsky-Vakhnenko equation has two properties that appear to be generic,
\begin{itemize}
\item Travelling waves that exist only up to a maximum limiting amplitude,
\item Limiting waves that have corners, i.e., a slope discontinuity.
\end{itemize}

The OV equation has peakon, shock and wave breaking phenomena even for smooth initial conditions in finite time. In \cite{Grimshaw_2012_SAM, Liu_2010_JMA, Hunter1990_LAM}, the authors have discussed the condition for wave breaking. Some exact solutions including periodic solution, and solitary traveling wave solution are investigated in \cite{Hunter1990_LAM, Parkes_2007_CSF, Parkes_2008_CSF}. Well-posedness results can be found in \cite{Gui_2007_CPDE, Linares_2006_JDE, Varlamov_2004_DCDs}. Through the hodograph transformation, \cite{Feng_2015_JPMT, Feng_2017_JPMT} provide the  cuspon and loop soliton solutions for the generalized OV system. Several numerical methods are proposed for the OV equation such as Fourier pseudo-spectral methods \cite{Grimshaw_2012_SAM} and a finite difference scheme based on the Engquist-Osher scheme \cite{2017_Coclite_BIT, 2018_Ridder_BIT}. Additionally, rigorous numerical analysis of the OV equation is concluded by Coclite, Ridder and Risebro  in  \cite{2017_Coclite_BIT, 2018_Ridder_BIT}, including the convergence results and  the existence of entropy solution. In \cite{Brunelli_2013_CNSNS}, bi-Hamiltonian structure of the OV equation is confirmed, i.e., the OV equation has infinite conservative quantities, in which we investigate energy $E$ and Hamiltonian $H$ as
\begin{align}
 E = \int_I u^2 dx, \mbox{  } H = \int_I -\frac{1}{6}u^3 + \frac{1}{2}(\partial^{-1}u)^2 dx.
\end{align}
The development of our numerical schemes are based on these two conservative quantities.  As the conservative methods for KdV equation \cite{Bona2013_MC, Xing2016_CiCP, Zhang2019CiCP}, Zakharov system \cite{Xia2010JCP}, Schr\"odinger-KdV system \cite{Xia2014CiCP}, short pulse equation \cite{Zhang2019_arxiv}, etc., various conservative numerical schemes are proposed to ``preserve structure". Usually, the conservative schemes can help reduce the phase error along the long time evolution.


The DG method was first introduced in 1973 by Reed and Hill in \cite{Reed1973} for solving steady state linear hyperbolic equations. The important ingredient of this method is the design of suitable inter-element boundary treatments (so called numerical fluxes) to obtain highly accurate and stable schemes in several situations. Within the DG framework, the method was extended to deal with derivatives of order higher than one, i.e., local discontinuous Galerkin (LDG) method. The first LDG method was introduced by Cockburn and Shu in \cite{Shu1998_Siam} for solving convection-diffusion equation. Their work was motivated by the successful numerical experiments of Bassi and Rebay \cite{Bassi1997_JCP} for compressible Navier-Stokes equations. Later, Yan and Shu developed an LDG method for a general KdV type equation containing third order
derivatives in \cite{Yan2002_Siam}, and they generalized the LDG method to PDEs with fourth and fifth spatial derivatives in \cite{Yan2002_JSC}. Levy, Shu and Yan \cite{Levy2004_JCP} developed LDG methods for nonlinear dispersive equations that have compactly supported traveling wave solutions, the so-called compactons. More recently, Xu and Shu further generalized the LDG method to solve a series of nonlinear wave equations \cite{Xu2004_JCM,Xu2005_JCP,Xu2005_PDNP,Xu2006_CMAME, Zhang2019_Jsc}. We refer to the review paper \cite{Xu2010_CiCP} of LDG methods for high-order time-dependent partial differential equations.

Here, we adopt the DG method as a spatial discretization to construct high order accurate numerical schemes for the OV equation.
 For general solutions,  the Hamiltonian conservative DG scheme and the energy stable schemes that contain the DG scheme and the integration DG scheme are developed. The energy stable schemes work for the smooth, peakon and shock solutions. The Hamiltonian conservative DG scheme can handle the smooth, peakon solutions and preserve the Hamiltonian spatially. The stability and conservation refer to the semi-discrete properties. For the time discretization, we use the so called total variation diminishing (TVD) or strong stability preserving (SSP) Runge-Kutta methods in \cite{1988_Shu_JCP, Gottlieb_2001_SIAM}. For some singular soliton solutions, we utilize the hodograph transformation to transform the OV equation to a coupled dispersionless (CD) type system, and then develop the DG scheme for the transformed CD system.

The paper is organized as follows. In Section 2, we directly construct two energy stable and Hamiltonian conservative DG schemes for the OV equation. We provide proofs of $L^2$ stability and Hamiltonian conservation. Suboptimal error estimates of the two energy stable schemes are also proved in this section.   For some singular soliton solutions, including loop and cuspon solitons, we transform the OV equation to the CD system via the hodograph transformation in Section 3. Subsequently, two DG schemes are developed for the CD system to obtain the numerical solutions for the OV equation indirectly. Some numerical experiments are presented in Section 4 to show the results of approximation. This paper is concluded in Section 5.

\section{The DG methods for the Ostrovsky-Vakhnenko equation}\label{sec2}
In this section, we develop two kinds of DG methods for the OV equation \eqref{Eqn:OV}, including energy stable schemes and the Hamiltonian conservative DG scheme.

\subsection{Notations}\label{notation}

We denote the mesh $\mathcal{T}_h $ by $I_j = [x_{j-\frac{1}{2}}, x_{j+\frac{1}{2}}] $ for $j = 1,\ldots, N$, with the cell center denoted by $ x_j = \frac{1}{2}(x_{j-\frac{1}{2}}+x_{j+\frac{1}{2}})$. The cell size is $\Delta x_j = x_{j+\frac{1}{2}}- x_{j-\frac{1}{2}} $ and $ h = \max\limits_{1\leq j\leq N} \ \Delta x_j$.  The finite element space as the solution and test function space consists of piecewise polynomials
$$V_h^k = \{v:v|_{I_j} \in P^k(I_j); 1\leq j\leq N\},$$
 where $P^k(I_j)$ denotes the set of polynomial of degree up to $k$ defined on the cell $I_j$. Notably, the functions in $V_h^k$ are allowed to be discontinuous across cell interfaces. The values of $u$ at $x_{j+\frac{1}{2}}$ are denoted by $u_{j+\frac{1}{2}}^-$ and $u_{j+\frac{1}{2}}^+$, from the left cell $I_j$ and the right cell $I_{j+1}$, respectively. Additionally, the jump of $u$ is defined as $\jump{u} = u^+ - u^- $, the average of $u$ as $\average{u} = \frac{1}{2}(u^+ + u^-)$.

After the hodograph transformation, the spatial variable change into $y$ from $x$. We denote the mesh $\mathcal{T}'_h $ by $I'_j = [y_{j-\frac{1}{2}}, y_{j+\frac{1}{2}}] $ for $j = 1,\ldots, N$. As the same definition on variable $x$,  we have $y_j, \Delta y_j,h' = \max\limits_{1\leq j\leq N} \ \Delta y_j$.

To simplify expressions, we adopt the round bracket and angle bracket for the $L^2$ inner product on cell $I_j$ and its boundary
\begin{equation}\label{bracket_def}
\begin{split}
(u, v)_{I_j}& = \int_{I_j} uv dy,\\
<\hat{u}, v>_{I_j}& = \hat{u}_{j+\frac{1}{2}}v_{j+\frac{1}{2}}^- - \hat{u}_{j-\frac{1}{2}}v_{j-\frac{1}{2}}^+
\end{split}
\end{equation}
for one dimensional case.

\subsection{The Energy stable schemes}\label{sec:energy DG}
In this section, we develop two DG schemes with $L^2$ energy stability, and for smooth solutions, suboptimal order of accuracy $(k+\frac{1}{2})$-$th$ is proved for these two DG schemes. To distinguish other DG schemes in this paper, we call it the energy stable DG scheme and energy stable integration DG scheme for the OV equation \eqref{Eqn:OV}.

\subsubsection{The DG scheme for the OV equation}
First, we divide the OV equation into a first order system
\begin{align}\label{eqn:split_OV}
\begin{cases}
u_t + (\frac{1}{2}u^2)_x  + \gamma v = 0, \ x\in[a,b],\\
v_x = u.
\end{cases}
\end{align}

An extra constraint for $v$ is necessary to ensure the unique solution of the initial value problem \eqref{Eqn:OV}. Referring to \cite{2017_Coclite_BIT, 2018_Ridder_BIT}, there are two cases of constraints for $v$:
\begin{itemize}
\item  For the  Dirichlet boundary problem, the fixed boundary condition for $v$ is adopted,
\begin{align}\label{eqn: boundary}
  v(a,t)= 0 \ \ \text{or} \ \ v(b,t) = 0.
\end{align}
\item  For the periodic boundary problem, the zero mean condition  $\int_I v dx = 0$ is adopted.
\end{itemize}
$\mathbf{Scheme \ 1}$  :
The  energy stable DG scheme is formulated as follows: Find the numerical solutions $u_h, v_h \in V_h^k$,  for all test functions $\phi, \varphi \in V_h^k$, such that
\begin{subnumcases}{\label{scheme:energy_con}}
((u_h)_t, \phi)_{I_j} + <\widehat{f(u_h)}, \phi>_{I_j} - (f(u_h), \phi_x)_{I_j}  + \gamma(v_h, \phi)_{I_j} = 0,\label{scheme:energy_con1} \\
<\widehat{v_h}, \varphi>_{I_j} - (v_h, \varphi_x)_{I_j} = (u_h,\varphi)_{I_j}\label{scheme:energy_con2}
\end{subnumcases}
where $f(u) = \frac{1}{2}u^2 $.  The ``hat" terms in \eqref{scheme:energy_con} are the so-called ``numerical fluxes'', which are functions defined on the cell boundary from integration by parts and should be designed based on different guiding principles for different PDEs to ensure the stability and local solvability of the intermediate variables. To introduce some dissipation of $L^2$ energy, we adopt the dissipative numerical fluxes as
\begin{align}
&\widehat{v_h} = \begin{cases} v_h^-, \; & \gamma > 0 , \\
                               v_h^+, \; &  \gamma < 0,
                  \end{cases}, \label{eqn: flux1}\\
&\widehat{f(u)} = \frac{1}{2}(f(u^+)+f(u^-)-\alpha(u^+-u^-)), \ \alpha = \max\limits_u\abs{f'(u)}.\label{eqn: flux2}
\end{align}

The flux $\widehat{f(u)}$ we consider here is the Lax-Friedrichs flux,  which is regarded as a dissipative flux. The numerical flux $\widehat{v_h}$ depends on the sign of the parameter $\gamma$. When $\gamma$ is positive, $\widehat{v_h}$ is taken as $v_h^-$, otherwise, $\widehat{v_h} = v_h^+$. Numerically, the DG scheme with numerical fluxes \eqref{eqn: flux1}, \eqref{eqn: flux2} can achieve $(k+1)$-$th$ order of accuracy.


$\mathbf{Scheme \ 2}$  :
Alternatively, we can integrate the equation $(v_h)_x = u_h $ directly instead of the DG scheme \eqref{scheme:energy_con2}. Therefore, the energy stable integration DG scheme is defined as: Find the numerical solutions $u_h \in V_h^k$, $v_h \in V_h^{k+1}$, for all test functions $\phi \in V_h^k$, such that
\begin{subnumcases}{\label{scheme:energy_con_integration}}
((u_h)_t, \phi)_{I_j} + <\widehat{f(u_h)}, \phi>_{I_j} - (f(u_h), \phi_x)_{I_j}  + \gamma(v_h, \phi)_{I_j} = 0,\label{scheme:energy_con_integration1} \\
 v_h(x,t)\mid_{I_j}  = v_h(x_{j+\frac{1}{2}},t) - \int_x^{x_{j+\frac{1}{2}}}u_h(\xi,t) \ d\xi.\label{scheme:energy_con_integration2}
\end{subnumcases}
The equation \eqref{scheme:energy_con_integration2} can also be replaced by
\begin{align}\label{scheme:energy_con_integration3}
v_h(x,t)\mid_{I_j}  = v_h(x_{j-\frac{1}{2}},t) + \int^x_{x_{j-\frac{1}{2}}}u_h(\xi,t) \ d\xi,
\end{align}
which depends on the boundary condition of $v_h$. For the constraint of numerical solution $v_h$, we will give a more specific explanation in next section.

\subsubsection{Algorithm flowchart}\label{sec:algorithm}
In this part, we give some details related to the implementation of our numerical Scheme 1 and Scheme 2. We can see that the equation \eqref{scheme:energy_con1}, \eqref{scheme:energy_con_integration1} are exactly the same. The main difference between Scheme 1 and Scheme 2 lies in  \eqref{scheme:energy_con2} and \eqref{scheme:energy_con_integration2}, respectively,  which we will explain in Step 1.

{\bf{Step 1}} : First, we obtain $v_h$ from $u_h$ by \eqref{scheme:energy_con2} in Scheme 1, or \eqref{scheme:energy_con_integration2} in Scheme 2.
\begin{itemize}
\item In Scheme 1: From the equation \eqref{scheme:energy_con2}, we have the following matrix form,
    \begin{align}
     \mathbf{A}\mathbf{v}_h= \mathbf{u}_h.
    \end{align}
Here, $\mathbf{u}_h, \mathbf{v}_h$ are the vectors containing the degrees of freedom for $u_h$ and $v_h$, respectively. The size of matrix $\mathbf{A}$ is $(N*(k+1))\times(N*(k+1)) $, $N $ is the number of spatial cells and $k$ is the degree of the approximate space $V_h^k$. However, if $v_h$ is periodic, the matrix $\mathbf{A}$ is under-determined and the rank of $\mathbf{A}$ is $N*(k+1)-1$. Therefore, as a replacement, the zero mean condition $\int_I v_h = 0$ helps determine a unique solution.

\item In Scheme 2:
Under the fixed boundary condition of $v_h$, we choose $v_h(x_{N+\frac{1}{2}},t) = 0$   in \eqref{scheme:energy_con_integration2}  or $v_h(x_{\frac{1}{2}},t) = 0$ in \eqref{scheme:energy_con_integration3}. And then $v_h$ can be solved cell by cell.  For the zero mean condition, we also choose $v_h(x_{N+\frac{1}{2}},t) = 0 $. According to the equation  \eqref{scheme:energy_con_integration2}, we can get $v_h$ on each cell $I_j$. Subsequently, we check our zero mean condition by calculating the value of $\bar{v}_h = \int_I v_h $. Generally,  $\bar{v}_h$ will not be zero. Thereafter, a modification is done for the value of $v_h(x_{j+\frac{1}{2}},t)$,
$$ v_h(x_{j+\frac{1}{2}},t) = v_h(x_{j+\frac{1}{2}},t) - \frac{\bar{v}_h}{b-a},\ j = 1,\ldots, N. $$
Subsequently, we obtain a numerical solution $v_h$ that satisfies the condition $\int_I v_h = 0$.
\end{itemize}

{\bf{Step 2}} :  Substituting $v_h$ into the equation \eqref{scheme:energy_con1}, we have
  \begin{align}
  (\mathbf{u}_h)_t = \mathbf{res}(\mathbf{u_h}, \mathbf{v_h}).
  \end{align}
By choosing a suitable ODE solver, such as Runge-Kutta time discretization method, we will finally implement these two numerical schemes.

\subsubsection{$L^2$ stability of the energy stable schemes}
The $L^2$ stability of Scheme 1 and Scheme 2 are presented in Proposition \ref{prop1} and \ref{prop2}, respectively. This is the reason why we call Scheme 1 as the energy stable DG scheme, and Scheme 2 as the energy stable integration DG scheme.

\begin{prop}\label{prop1}($L^2$ stability for Scheme 1)

The semi-discrete DG scheme \eqref{scheme:energy_con} with fluxes \eqref{eqn: flux1}, \eqref{eqn: flux2} is an $L^2$ energy stable DG scheme, i.e.,
\begin{equation}
\frac{d}{dt} E(u_h) = \frac{d}{dt}\int_I u_h^2 dx \leq 0.
\end{equation}

\end{prop}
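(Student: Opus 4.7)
The plan is a standard energy estimate for DG methods on a conservation law with a source, so the two scheme equations must be combined with a careful choice of test functions. First I would take $\phi = u_h$ in \eqref{scheme:energy_con1} and sum over all cells $I_j$; this converts $((u_h)_t,u_h)_{I_j}$ into $\tfrac12\tfrac{d}{dt}\|u_h\|_{I_j}^2$, leaves the source term $\gamma(v_h,u_h)_{I_j}$ to be handled separately, and collects the convective contribution
\begin{equation*}
\Theta_{j+\frac12} \;=\; -\widehat{f(u_h)}_{j+\frac12}\,\jump{u_h}_{j+\frac12} \;+\; \tfrac16 \jump{u_h^3}_{j+\frac12}
\end{equation*}
at every interface, where I use $(f(u_h),(u_h)_x)_{I_j}=\tfrac16[u_h^3]_{\partial I_j}$ since $f(u)=u^2/2$ integrates to $u^3/6$.

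Next I would reduce the source term $\gamma\sum_j(v_h,u_h)_{I_j}$ using \eqref{scheme:energy_con2} with the test function $\varphi=v_h$. This replaces $(u_h,v_h)_{I_j}$ by $\langle \widehat{v_h},v_h\rangle_{I_j}-\tfrac12[v_h^2]_{\partial I_j}$, and summation over $j$ collapses to an interface sum of $\jump{v_h}(\average{v_h}-\widehat{v_h})$. With the upwind choice \eqref{eqn: flux1}, a direct computation gives $\average{v_h}-\widehat{v_h} = \tfrac12\,\mathrm{sgn}(\gamma)\jump{v_h}$, so that
\begin{equation*}
\gamma\sum_j(v_h,u_h)_{I_j} \;=\; \tfrac{|\gamma|}{2}\sum_{j+\frac12}\jump{v_h}_{j+\frac12}^{\,2} \;\ge\; 0.
\end{equation*}
Throughout this step I would assume periodic boundary conditions (or, in the Dirichlet case with $v_h=0$ at one endpoint, verify that the left-over boundary contributions also have the right sign), so that telescoped endpoint terms drop out.

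The remaining work is to show that the convective interface contribution is non-negative. Plugging the Lax--Friedrichs flux \eqref{eqn: flux2} into $\Theta_{j+\frac12}$ and using $\jump{u_h^3}=((u^+)^2+u^+u^-+(u^-)^2)\jump{u_h}$ together with $\average{u_h^2}=\tfrac12((u^+)^2+(u^-)^2)$, a short algebraic manipulation collapses the algebra to
\begin{equation*}
\Theta_{j+\frac12} \;=\; \tfrac{\alpha}{2}\jump{u_h}_{j+\frac12}^{\,2} \;-\; \tfrac{1}{12}\jump{u_h}_{j+\frac12}^{\,3}.
\end{equation*}
The main obstacle is to argue that this quantity is non-negative; this is where the specific definition $\alpha=\max_u|f'(u)|=\max|u_h|$ is essential. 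I would bound $|\jump{u_h}|\le |u^+|+|u^-|\le 2\alpha$, from which $\tfrac{1}{12}|\jump{u_h}|^3\le\tfrac{\alpha}{6}\jump{u_h}^2\le\tfrac{\alpha}{2}\jump{u_h}^2$ and hence $\Theta_{j+\frac12}\ge 0$.

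Combining the three ingredients yields
\begin{equation*}
\tfrac12\tfrac{d}{dt}\|u_h\|_{L^2(I)}^2 \;=\; -\sum_{j+\frac12}\Theta_{j+\frac12} \;-\; \tfrac{|\gamma|}{2}\sum_{j+\frac12}\jump{v_h}_{j+\frac12}^{\,2} \;\le\; 0,
\end{equation*}
which is the stated $L^2$ energy stability. All three non-negative terms on the right provide explicit dissipation (numerical viscosity from the Lax--Friedrichs convective flux and upwinding of $\widehat{v_h}$), and this identity also pinpoints where the dissipation resides, a fact that will be reused later in the suboptimal error estimate.
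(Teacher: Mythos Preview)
Your proof is correct and follows the same overall energy-method structure as the paper: take $\phi=u_h$ in \eqref{scheme:energy_con1}, take $\varphi=v_h$ (the paper uses $\varphi=\gamma v_h$, which is the same up to the harmless factor) in \eqref{scheme:energy_con2}, and combine. Your treatment of the $v_h$-source term is essentially identical to the paper's, arriving at the same nonnegative interface contribution $\tfrac{|\gamma|}{2}\jump{v_h}^2$.

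The only genuine difference is in how the convective interface term is shown to be nonnegative. The paper writes $\jump{F(u_h)}=f(\xi)\jump{u_h}$ via the mean value theorem (with $F'=f$) and then invokes the monotone/E-flux property of the Lax--Friedrichs flux, an argument that works verbatim for any $f$ and any monotone numerical flux. You instead exploit the explicit quadratic form $f(u)=u^2/2$ to compute $\Theta_{j+\frac12}=\tfrac{\alpha}{2}\jump{u_h}^2-\tfrac{1}{12}\jump{u_h}^3$ directly and close with $|\jump{u_h}|\le 2\alpha$. Your route is more elementary and yields a sharper, fully explicit lower bound $\Theta_{j+\frac12}\ge \tfrac{\alpha}{3}\jump{u_h}^2$, at the cost of being specific to this particular $f$; the paper's route is more general but less quantitative. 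Either argument suffices for the proposition.
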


\begin{proof}
We take the test function $\phi = u_h$, $\varphi = \gamma v_h$ in scheme \eqref{scheme:energy_con}, thereafter, we obtain
\begin{align}
&((u_h)_t, u_h)_{I_j} + <\widehat{f(u_h)}, u_h>_{I_j} - (f(u_h), (u_h)_x)_{I_j}  + \gamma(v_h, u_h)_{I_j} = 0, \\
&<\widehat{v_h}, v_h>_{I_j} - (v_h, (v_h)_x)_{I_j} = (u_h, v_h)_{I_j}.
\end{align}
After applying summation of the above-mentioned two equations, we have
\begin{align}\label{eqn:cell entropy}
((u_h)_t, u_h)_{I_j}  + \Phi_{j+ \frac{1}{2}} - \Phi_{j-\frac{1}{2}} + \Theta_{j-\frac{1}{2}} = 0
\end{align}
where the numerical entropy flux is
\begin{equation}
\Phi = \gamma \widehat{v_h}(v_h^-)  -  \frac{\gamma}{2}(v_h^-)^2 + \widehat{f(u_h)}u_h^- - F(u_h^-)
\end{equation}
and the extra term $\Theta$ is given by
\begin{equation}\label{eqn:theta}
\begin{split}
\Theta = & -\gamma \widehat{v_h}\jump{v_h} - \gamma(\frac{1}{2}(v^-_h)^2 + \frac{1}{2}(v^+_h)^2) - \widehat{f(u_h)}\jump{u_h} + \jump{F(u_h)} \\
       = & \gamma(-\widehat{v_h} + \average{v_h})\jump{v_h} + (f(\xi) - \average{f(u_h)})\jump{u_h} + \frac{1}{2}\alpha\jump{u_h}^2.
\end{split}
\end{equation}
The choice of $\widehat{v_h}$ \eqref{eqn: flux2} can guarantee that the first term of \eqref{eqn:theta} is non-negative. According to the monotonicity of the numerical flux $f(\uparrow,\downarrow)$, we divide the above-mentioned equation into two cases:
\begin{align*}
 u^-\leq \xi \leq u^+,  \ (f(\xi) - \average{f(u)})\jump{u} \geq 0, \\
 u^+\leq \xi \leq u^-,  \ (f(\xi) - \average{f(u)})\jump{u} \geq 0.
\end{align*}
Thereafter,  we find that the whole term $\Theta$ is non-negative. Summing up the cell entropy equalities \eqref{eqn:cell entropy} with the periodic boundary condition or homogeneous Dirichlet boundary condition, we have the energy stability as
\begin{equation}
(u_h,(u_h)_t)_{I} \leq 0,
\end{equation}
i.e., $L^2$ energy stability of the DG scheme \eqref{scheme:energy_con} for the OV equation.
\end{proof}

\begin{prop}\label{prop2}($L^2$ stability for Scheme 2)

The semi-discrete DG scheme \eqref{scheme:energy_con_integration} is an $L^2$ energy stable scheme, i.e.,
\begin{equation}
\frac{d}{dt} E(u_h) = \frac{d}{dt}\int_I u_h^2 dx \leq 0.
\end{equation}
\end{prop}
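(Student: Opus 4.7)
The plan is to mirror the argument of Proposition~\ref{prop1}, the only genuinely new ingredient being the treatment of the coupling term $\gamma(v_h,u_h)$. The decisive observation is that, by the defining formula \eqref{scheme:energy_con_integration2}, Scheme~2 produces $v_h$ satisfying the pointwise identity $(v_h)_x = u_h$ on every cell $I_j$; moreover, the recursive use of common interface values in Step~1 of Section~\ref{sec:algorithm} forces $v_h$ to be continuous at each cell interface, so $v_h\in V_h^{k+1}\cap C^0(I)$.

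Set $\phi = u_h$ in \eqref{scheme:energy_con_integration1}. The inviscid flux terms are processed exactly as in the proof of Proposition~\ref{prop1}: the Lax--Friedrichs numerical flux \eqref{eqn: flux2} yields on each $I_j$ a telescoping numerical entropy flux together with the non-negative jump dissipation $\tfrac{1}{2}\alpha\jump{u_h}^2$. The new ingredient is the coupling term; using the pointwise identity $(v_h)_x = u_h$ on $I_j$ we obtain
\begin{equation}
\gamma(v_h,u_h)_{I_j} = \gamma\int_{I_j} v_h (v_h)_x\, dx = \frac{\gamma}{2}\bigl(v_h(x_{j+1/2})^2 - v_h(x_{j-1/2})^2\bigr),
\end{equation}
a pure interface contribution. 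Summing over $j$, continuity of $v_h$ across every interior interface makes the sum telescope to $\tfrac{\gamma}{2}\bigl(v_h(b)^2 - v_h(a)^2\bigr)$.

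It remains to show this global endpoint contribution vanishes. For the periodic problem it is immediate from periodicity of $v_h$. For the Dirichlet problem one endpoint value of $v_h$ is set to zero by the algorithmic choice in Step~1; the opposite endpoint equals $\pm\int_I u_h\,dx$ from the integration formula, which vanishes whenever the discrete analogue of the zero-mean law for the OV equation holds (which is why the paper requires periodicity or the zero-mean condition). Assembling the three pieces and summing over all $j$ then yields $\tfrac{d}{dt}\|u_h\|_{L^2(I)}^2 \le 0$.

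The main obstacle is precisely this last step: in the non-periodic setting one must verify that the opposite endpoint of $v_h$ is indeed zero. This reduces to the (near-automatic) preservation of the discrete mean $\int_I u_h\,dx$ by the DG discretization, which follows by testing \eqref{scheme:energy_con_integration1} against $\phi\equiv 1$ together with compatibility of the boundary data and initial conditions.
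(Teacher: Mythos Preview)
Your proof is correct and follows essentially the same route as the paper's own argument: both take $\phi=u_h$, invoke the Proposition~\ref{prop1} treatment of the nonlinear flux, and then use the pointwise identity $(v_h)_x=u_h$ on each cell to rewrite $\gamma(v_h,u_h)_{I_j}=\tfrac{\gamma}{2}\bigl(v_h^2|_{x_{j+1/2}}-v_h^2|_{x_{j-1/2}}\bigr)$, which telescopes thanks to the continuity of $v_h$ and the periodic or homogeneous Dirichlet boundary condition. The paper is terse where you are more explicit about why the endpoint contribution vanishes in the non-periodic case, but the underlying mechanism is identical.
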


\begin{proof}
We take test function $\phi = u_h$ in \eqref{scheme:energy_con_integration1},
\begin{align}\label{eqn:prop2_eqn}
((u_h)_t, u_h)_{I_j} + <\widehat{f(u_h)}, u_h>_{I_j} - (f(u_h), (u_h)_x)_{I_j}  + \gamma(v_h, u_h)_{I_j} = 0,
\end{align}
Additionally, following the idea of Proposition \ref{prop1}, for the nonlinear term $f(u)$, we can have a stable property. There is an extra term $\gamma(v_h, u_h)_{I_j}$ required to be estimated.
The Scheme 2, which is also called the integration DG method, is based on $(v_h)_x = u_h$, then
\begin{align}
\gamma(v_h, u_h)_{I_j} = \gamma(v_h, (v_h)_x)_{I_j} = \frac{\gamma}{2} ((v_h)^2_{j+\frac{1}{2}} - (v_h)^2_{j-\frac{1}{2}}).
\end{align}
Due to the continuity of $v_h$ and the periodic or homogeneous Dirichlet boundary condition,  we obtain the result of $L^2$ stability after summing up the equation \eqref{eqn:prop2_eqn} over all cells,
\begin{equation}
\frac{d}{dt} E(u_h) = \frac{d}{dt}\int_I u_h^2 dx \leq 0.
\end{equation}

\end{proof}

\subsubsection{Error estimates of the energy stable schemes}
In this section, the a-priori error estimate of  Scheme 1 \eqref{scheme:energy_con} and Scheme 2 \eqref{scheme:energy_con_integration} will be stated. Referring to the procedure in \cite{Zhang2018_SIAM, Xu2007_CMAME}, we will give the brief proofs in the subsequent descriptions. Without loss of generality, we let $\gamma = 1$ in this part.

First, we make some preparations for error estimate by giving necessary assumptions, projection and interpolation properties. The standard $L^2$ projection of a function $\zeta$ with $k+1$ continuous derivatives into space $V_h^k$, is denoted by $\mathcal{P}$, i.e., for each $I_j$
\begin{equation}\label{projection}
\begin{split}
&(\mathcal{P}\zeta - \zeta,\phi)_{I_j} =0, \ \forall \phi \in P^{k}(I_j),
\end{split}
\end{equation}
and the special projections $\mathcal{P}^{\pm}$ into $V_h^k$  satisfy, for each $I_j$
\begin{align}
(\mathcal{P}^{+}\zeta - \zeta,\phi)_{I_j} =0, \ \forall \phi \in P^{k-1}(I_j), \ \text{and} \ \mathcal{P}^{+}\zeta(y_{j-\frac{1}{2}}^+) = \zeta({y_{j-\frac{1}{2}}}),\label{eqn:special_projection1}\\
(\mathcal{P}^{-}\zeta - \zeta,\phi)_{I_j} =0, \ \forall \phi \in P^{k-1}(I_j),\ \text{and} \
\mathcal{P}^{-}\zeta(y_{j+\frac{1}{2}}^-) = \zeta({y_{j+\frac{1}{2}}}).\label{eqn:special_projection2}
\end{align}
For the projections mentioned above, it is easy to show \cite{1975_Ciarlet_NH} that
\begin{equation}\label{projection error}
\norm{ \zeta^e}_{L^2(I)} +  h^{\frac{1}{2}} \norm{ \zeta^e}_{L^{\infty}(I)} + h^{\frac{1}{2}}\norm{ \zeta^e}_{L^2({\partial I})}  \leq Ch^{k+1}
\end{equation}
where $\zeta^e =\zeta -\mathcal{P}\zeta  $ or $\zeta^e =\zeta -\mathcal{P}^{\pm}\zeta$, and the positive constant $C$ only depends on $\zeta$.

We will use an inverse inequality in the subsequent proofs. For $\forall u\in V_h^k$, there exists a positive constant $\sigma$ (we call it the inverse constant), such that
\begin{equation}\label{eqn:inverse inequality}
\norm{u}_{L^2(\partial{I})} \leq \sigma h^{-\frac{1}{2}}\norm{u}_{L^2(I)},
\end{equation}
where $\norm{u}_{L^2(\partial{I})}  = \sqrt{\sum\limits_{j=1}^{N}(u_{j+\frac{1}{2}}^-)^2 +(u_{j-\frac{1}{2}}^+)^2}$.

Additionally, to deal with the nonlinearity of the flux $f(u)$, we make a priori assumption that, there holds
\begin{equation}
\norm{u-u_h}_{L^2({I})}  \leq h
\end{equation}
for small enough $h$. Under this assumption, the error of $L^\infty$ norm satisfies
\begin{equation}\label{eqn:infty_estimate}
\norm{u-u_h}_{L^{\infty}({I})} \leq Ch^{\frac{1}{2}}, \ \norm{\zeta^e}_{L^{\infty}({I})} \leq Ch^{\frac{1}{2}}
\end{equation}
where $\zeta^e =\zeta -\mathcal{P}\zeta  $ or $\zeta^e =\zeta -\mathcal{P}^{\pm}\zeta$.

For the Scheme 1, we have below theorem to demonstrate the result of convergence for smooth exact solutions.
\begin{theorem}\label{thm1}
 It is assumed that the OV equation \eqref{eqn:split_OV} with periodic boundary condition has a sufficiently smooth exact solution $u$. The numerical solution $u_h$ satisfies the semi-discrete DG scheme \eqref{scheme:energy_con} with flux \eqref{eqn: flux1},\eqref{eqn: flux2}. For regular partitions of $I = (a, b)$, and the finite element
space $V^k_h$ with $k \geq 0$, there holds the following error estimate for small enough $h$
\begin{align}
\norm{ u - u_h}_{L^2(I)} \leq Ch^{k+\frac{1}{2}}.
\end{align}
\end{theorem}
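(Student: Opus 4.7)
The plan is to follow the now-standard DG error-estimate template (as in Xu--Shu or the reference Zhang--Shu cited in the paper), adapted to the coupled system \eqref{eqn:split_OV}. I decompose the error into a projection part and a finite-element part,
\begin{equation*}
 u-u_h=\eta_u-\xi_u,\qquad v-v_h=\eta_v-\xi_v,
\end{equation*}
where $\eta_u=u-\mathcal{P}u$ with the standard $L^2$ projection for $u$, and $\eta_v=v-\mathcal{P}^-v$ with the Gauss--Radau projection of \eqref{eqn:special_projection2} tailored to the upwind choice $\widehat{v_h}=v_h^-$ (assume $\gamma=1$). The projection properties \eqref{projection error} already give $O(h^{k+1})$ for the $\eta$-terms, so everything reduces to bounding $\xi_u,\xi_v\in V_h^k$.

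First I would write the Galerkin-orthogonality error equations by subtracting the scheme \eqref{scheme:energy_con} from the consistent formulation satisfied by the exact $(u,v)$, then take test functions $\phi=\xi_u$ and $\varphi=\xi_v$ and add them, mimicking the cell-entropy calculation in Proposition \ref{prop1}. The left-hand side produces $\tfrac12\tfrac{d}{dt}\|\xi_u\|_{L^2(I)}^2$ plus a non-negative boundary term $\Theta^\xi$ that controls $\sum_j(\jump{\xi_u}^2+\jump{\xi_v}^2)$ through the Lax--Friedrichs dissipation and the upwind choice $\widehat{\xi_v}=\xi_v^-$. The right-hand side consists of three kinds of terms: (i) the projection residuals involving $\eta_u,\eta_v$, which by the defining orthogonality properties of $\mathcal{P}$ and $\mathcal{P}^-$ (and the fact that $\mathcal{P}^-$ kills the boundary contribution of $\widehat{v_h}$) reduce to volume integrals bounded by $Ch^{k+1}\|\xi_u\|$; (ii) the coupling term $(\eta_v-\eta_u,\xi_u)$ etc., handled directly by Cauchy--Schwarz and \eqref{projection error}; and (iii) the nonlinear flux residual from $f(u)-\widehat{f(u_h)}$.

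The main obstacle is the nonlinear term. I would write $f(u)-f(u_h)=\tfrac12(u+u_h)(u-u_h)$ and split $u-u_h=\eta_u-\xi_u$, giving a volume piece $(\tfrac12(u+u_h)(\eta_u-\xi_u),(\xi_u)_x)$ and a jump piece from the numerical flux. The volume piece is estimated via integration by parts on each cell (moving the derivative off $\xi_u$), then invoking the $L^\infty$ a-priori bound \eqref{eqn:infty_estimate} to absorb $u+u_h$, and finally using the inverse inequality \eqref{eqn:inverse inequality} together with the jump control coming from $\Theta^\xi$; this is where the characteristic loss of a half power of $h$ arises, yielding a bound of the form $Ch^{k+1/2}\|\xi_u\|+\tfrac12\Theta^\xi$. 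The flux jump piece is treated by the standard Lax--Friedrichs manipulation, exploiting $\widehat{f(u)}=f(u)$ for the exact solution and the non-negative quadratic jump term already in $\Theta^\xi$.

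Putting everything together yields a differential inequality
\begin{equation*}
 \frac{d}{dt}\|\xi_u\|_{L^2(I)}^2 \le C\|\xi_u\|_{L^2(I)}^2 + C h^{2k+1},
\end{equation*}
and Gronwall with $\xi_u(\cdot,0)=0$ (choose $u_h(\cdot,0)=\mathcal{P}u_0$) produces $\|\xi_u\|_{L^2(I)}\le Ch^{k+1/2}$; adding the projection error via the triangle inequality gives the stated bound. Finally, I would close the argument by verifying the a-priori assumption $\|u-u_h\|_{L^2(I)}\le h$ through the standard continuation/bootstrap argument: the bound $Ch^{k+1/2}$ is strictly smaller than $h$ for $k\ge 1$ and $h$ small, which justifies the use of \eqref{eqn:infty_estimate} a posteriori.
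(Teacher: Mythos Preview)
Your proposal is correct and follows essentially the same route as the paper: the same projection choices ($\mathcal{P}$ for $u$, $\mathcal{P}^-$ for $v$ matched to the upwind flux), the same test functions $\phi=\xi_u,\ \varphi=\xi_v$, the same reduction of the linear part via projection orthogonality to $(\xi^u_t,\xi^u)_I+\sum_j\jump{\xi^v}^2-(\eta^v,\xi^u)_I$, and the nonlinear flux term handled by the Zhang--Shu/Xu--Shu estimate \eqref{eqn:H_estimate} (the paper simply cites that inequality rather than sketching its derivation as you do), followed by Gronwall. Your explicit mention of the bootstrap step to close the a-priori assumption is a point the paper leaves implicit.
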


\begin{proof}
First, we give the error equation between the exact solution and numerical solution,
\begin{align}
((u - u_h)_t,\phi)_{I_j} + & <f(u) - \widehat{f(u_h)},\phi>_{I_j} - (f(u) - f(u_h),\phi_x)_{I_j} + (v - v_h,\phi)_{I_j} \notag\\
+ & <v-\widehat{v_h},\varphi>_{I_j} - (v - v_h, \varphi_x)_{I_j} - (u - u_h, \varphi)_{I_j} = 0,
\end{align}
for all test functions $\phi, \varphi \in V_h^k$.
Thereafter, we define two bilinear forms
\begin{align}
&\mathcal{B}_j(u-u_h, v-v_h ; \phi,\varphi) \notag \\
&= ((u - u_h)_t,\phi)_{I_j}+ (v - v_h,\phi)_{I_j} + <v-\widehat{v_h},\varphi>_{I_j} - (v - v_h, \varphi_x)_{I_j} - (u - u_h, \varphi)_{I_j}
\end{align}
and
\begin{align}
&\mathcal{H}_j(f;u,u_h,\phi) = (f(u) - f(u_h),\phi_x)_{I_j} - <f(u) - \widehat{f(u_h)},\phi>_{I_j}.
\end{align}
After applying summation over all cells $I_j$, the error equation is expressed by
\begin{align}
\sum\limits_{j=1}^{N} \mathcal{B}_j(u-u_h, v-v_h ; \phi,\varphi) = \sum\limits_{j=1}^{N}\mathcal{H}_j(f;u,u_h,\phi).
\end{align}
Introducing notations
\begin{align}
&\xi^u = \mathcal{P}u-u_h, \ \eta^u = \mathcal{P}u - u, \\
&\xi^v = \mathcal{P}^-v-v_h,\ \eta^v = \mathcal{P}^-v - v,
\end{align}
and taking test functions $\phi = \xi^u, \varphi = \xi^v$, we have
\begin{align}
\sum\limits_{j=1}^{N} \mathcal{B}_j(\xi^u - \eta^u, \xi^v - \eta^v ; \xi^u,\xi^v) = \sum\limits_{j=1}^{N}\mathcal{H}_j(f;u,u_h,\xi^u).
\end{align}
For bilinear form $\mathcal{B}_j$, the following equation holds by projection properties \eqref{projection}-\eqref{eqn:special_projection2}
\begin{equation}\label{eqn:B_estimate}
\sum\limits_{j=1}^{N} \mathcal{B}_j(\xi^u - \eta^u, \xi^v - \eta^v ; \xi^u,\xi^v) =
(\xi^u_t, \xi^u)_{I}  + \sum\limits_{j=1}^{N}[\xi^v]^2_{j+\frac{1}{2}} - (\eta^v, \xi^u)_{I}.
\end{equation}
For bilinear form $\mathcal{H}_j$, we follow the idea of \cite{Zhang2018_SIAM, Xu2007_CMAME} to present the estimate of $\mathcal{H}_j$,
\begin{align}\label{eqn:H_estimate}
&\sum\limits_{j=1}^{N}\mathcal{H}_j(f;u,u_h,\xi^u) \leq -\frac{1}{4}\alpha(\hat{f};u_h)\jump{\xi^u}^2 + (C + C_*h^{-1}\norm{u-u_h}^2_{L^{\infty}(I)}) h^{2k+1}\notag \\
&+ (C+C_*(\norm{\xi^u}_{L^{\infty}(I)} + h^{-1}\norm{u-u_h}^2_{L^{\infty}({I})}))\norm{\xi^u}^2_{L^{\infty}(I)}
\end{align}
where $\alpha(\hat{f};u_h)$ is non-negative, the constant $C_*$ is a positive constant depending on the maximum of $\abs{f''}$ or/and $\abs{f'''}$, the details are listed in \cite{Zhang2018_SIAM,Xu2007_CMAME}.

Combining the estimate equations \eqref{eqn:B_estimate} and \eqref{eqn:H_estimate}, we obtain the final error estimate as follows,
\begin{align}
&(\xi^u_t, \xi^u)_{I} + \frac{1}{4}\alpha(\hat{f};u_h)\jump{\xi^u}^2 + \sum\limits_{j=1}^{N}[\xi^v]^2_{j+\frac{1}{2}} \notag\\
&\leq (\eta^v, \xi^u)_{I} + (C + C_*h^{-1}\norm{u-u_h}^2_{L^{\infty}(I)}) h^{2k+1}\notag+(C+C_*(\norm{\xi^u}_{L^{\infty}(I)} + h^{-1}\norm{u-u_h}^2_{L^{\infty}({I})}))\norm{\xi^u}^2_{L^{\infty}(I)}.
\end{align}
Using Young's inequality and interpolation properties \eqref{eqn:infty_estimate}, we get
\begin{align*}
 \frac{1}{2}\frac{d}{dt}\norm{\xi^u}_{L^2(I)} \leq C\norm{\xi^u}_{L^2{(I)}} + Ch^{2k+1} + Ch^{2k+2}.
\end{align*}
Utilized Gronwall's inequality, the equation becomes
\begin{align*}
\norm{\xi^u}_{L^2(I)}^2 \leq  Ch^{2k+1}.
\end{align*}
Therefore, the result of Theorem \ref{thm1} is derived by triangle inequality and the interpolation inequality \eqref{projection error}.

\end{proof}

For the Scheme 2, we also state the following error estimate for smooth exact solutions.
\begin{theorem}\label{thm2}
It is assumed that the OV equation \eqref{eqn:split_OV} with the periodic boundary condition has a sufficiently smooth exact solution $u$. The numerical solution $u_h$ satisfies the integration DG scheme \eqref{scheme:energy_con_integration}. For regular partitions of $I = (a, b)$, and the finite element
space $V^k_h$ with $k \geq 0$, for adequately small $h$, there holds
\begin{align}\label{eqn:thm2}
\norm{ u - u_h}_{L^2(I)} \leq Ch^{k+\frac{1}{2}}.
\end{align}
\end{theorem}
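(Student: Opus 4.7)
The plan is to mirror the proof of Theorem \ref{thm1}, reusing its treatment of the nonlinear flux term $\mathcal{H}_j(f;u,u_h,\xi^u)$ and isolating the one genuinely new ingredient: the $L^2$ control of $v-v_h$, which here is no longer produced by a DG projection but by direct integration. Setting $\xi^u = \mathcal{P}u - u_h$ and $\eta^u = \mathcal{P}u - u$ with $\mathcal{P}$ the standard $L^2$ projection into $V_h^k$, I would subtract \eqref{scheme:energy_con_integration1} from the PDE, take the test function $\phi = \xi^u$, and sum over $j$. The temporal term contributes $\tfrac{1}{2}\tfrac{d}{dt}\|\xi^u\|_{L^2(I)}^2$ since $(\eta^u_t,\xi^u)_I=0$ by \eqref{projection}, and the flux pieces give exactly \eqref{eqn:H_estimate} from Theorem \ref{thm1}, contributing a non-positive $-\tfrac{1}{4}\alpha(\hat f;u_h)\jump{\xi^u}^2$ term plus the usual $O(h^{2k+1})$ and $O(\|\xi^u\|_{L^2}^2)$ remainders under the a priori assumption and \eqref{eqn:infty_estimate}.

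The new term to control is $(v - v_h,\xi^u)_I$. The essential observation is that in Scheme 2 the reconstructed variable $v_h$ is globally continuous across interfaces by construction (the values $v_h(x_{j+\frac{1}{2}},t)$ are matched cell by cell in Step 1 of Section \ref{sec:algorithm}) and it satisfies $(v_h)_x = u_h$ pointwise within each cell. Thus $w := v - v_h \in H^1(I)$ with $w_x = u - u_h$. Combining the zero-mean constraint $\int_I v_h\,dx = 0$ enforced in the scheme with the corresponding zero-mean property of $v$, the difference $w$ has zero mean, so Poincar\'e's inequality gives
\begin{equation*}
\|v - v_h\|_{L^2(I)} \;\le\; C\,\|w_x\|_{L^2(I)} \;=\; C\,\|u - u_h\|_{L^2(I)} \;\le\; C\bigl(\|\xi^u\|_{L^2(I)} + h^{k+1}\bigr),
\end{equation*}
after which Cauchy--Schwarz and Young yield $|(v-v_h,\xi^u)_I| \le C\|\xi^u\|_{L^2(I)}^2 + Ch^{2k+2}$.

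Assembling these bounds produces the differential inequality
\begin{equation*}
\frac{1}{2}\frac{d}{dt}\|\xi^u\|_{L^2(I)}^2 + \frac{1}{4}\alpha(\hat f;u_h)\jump{\xi^u}^2 \;\le\; C\,\|\xi^u\|_{L^2(I)}^2 + C h^{2k+1}.
\end{equation*}
Dropping the non-negative jump term and applying Gronwall's lemma with $\xi^u(\cdot,0) = O(h^{k+1})$ gives $\|\xi^u\|_{L^2(I)}^2 \le C h^{2k+1}$, and then \eqref{projection error} combined with the triangle inequality yields \eqref{eqn:thm2}. A final step, as in Theorem \ref{thm1}, is to close the a priori assumption $\|u-u_h\|_{L^2(I)} \le h$ by a standard bootstrap argument for small enough $h$.

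The main obstacle I anticipate is the Poincar\'e step: it hinges on both the global $H^1$ regularity of $v-v_h$ (which uses the cell-by-cell matching in the construction of $v_h$) and on the scheme actually enforcing the same mean-value or boundary normalization on $v_h$ as on $v$. If the Dirichlet variant \eqref{eqn: boundary} is used instead of the zero-mean condition, the same Poincar\'e inequality still applies because $w$ vanishes at one endpoint; either way, verifying these compatibility conditions carefully is the key technical point, after which the rest of the argument is a routine adaptation of Theorem \ref{thm1}.
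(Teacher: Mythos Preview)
Your proposal is correct and reaches the same $h^{k+1/2}$ estimate, but it handles the antiderivative term $(v-v_h,\xi^u)_I$ by a different mechanism than the paper. The paper introduces an auxiliary $\tilde v\in V_h^{k+1}$ with $\tilde v_x = \mathcal{P}u$ (written as $\mathcal{P}^-u$ in the text, evidently a slip) and splits $v-v_h = (\tilde v - v_h) - (\tilde v - v) =: \xi^v - \eta^v$. Since both $\tilde v$ and $v_h$ are globally continuous antiderivatives, $\xi^v$ is continuous with $\xi^v_x = \xi^u$, so $(\xi^v,\xi^u)_I = \sum_j\tfrac12\jump{(\xi^v)^2}_{j+1/2}$ telescopes to zero \emph{exactly}; only $(\eta^v,\xi^u)_I$ remains, and $\|\eta^v\|_{L^2}\le Ch^{k+1}$ by a Poincar\'e-type bound. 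You instead apply the global Poincar\'e inequality directly to $w=v-v_h$ and obtain $\|w\|_{L^2}\le C\|u-u_h\|_{L^2}$ in one step. Your route is shorter and avoids the auxiliary $\tilde v$, at the price of an extra harmless $C\|\xi^u\|_{L^2}^2$ on the right of the Gronwall inequality; the paper's decomposition isolates precisely which piece of $(v-v_h,\xi^u)_I$ cancels structurally and which must be estimated, information that could matter for sharper bounds but is immaterial for the stated $h^{k+1/2}$ result.
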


\begin{proof}
Similarly, we give the error equations,
\begin{align}
&((u - u_h)_t,\phi)_{I_j} +  <f(u) - \widehat{f(u_h)},\phi>_{I_j} - (f(u) - f(u_h),\phi)_{I_j} + (v - v_h,\phi)_{I_j} = 0, \notag\\
& (v - v_h)_x = u - u_h \notag
\end{align}
for any test function $\phi \in V_h^k$. Thereafter, we define another bilinear form
\begin{align}
&\mathcal{\tilde{B}}_j(u-u_h, v-v_h ; \phi) = ((u - u_h)_t,\phi)_{I_j}+ (v - v_h,\phi)_{I_j}.
\end{align}
After applying summation over all cells $I_j$, the error equations are expressed by
\begin{align}
\sum\limits_{j=1}^{N} \mathcal{\tilde{B}}_j(u-u_h, v-v_h ; \phi) = \sum\limits_{j=1}^{N}\mathcal{H}_j(f;u,u_h,\phi).
\end{align}
We define notations
\begin{align}
&\xi^u = \mathcal{P}u-u_h, \ \eta^u = \mathcal{P}u - u, \\
&\xi^v = \tilde{v}-v_h,\ \eta^v = \tilde{v} - v
\end{align}
where $\tilde{v}_x = \mathcal{P^-}u, \tilde{v} \in V_h^{k+1}$, it satisfies $\xi^v_x = \xi^u$. By Poincar\'{e} inequality, the error of $\eta^v$
is controlled by $\eta^u$,
\begin{equation}
\norm{\eta^v}_{L^2(I_j)}\leq \norm{\eta^u}_{L^2(I_j)} \leq Ch^{k+1}.
\end{equation}

With the test function $\phi = \xi^u \in V_h^k $, we have
\begin{align}
\sum\limits_{j=1}^{N} \mathcal{\tilde{B}}_j(\xi^u - \eta^u, \xi^v - \eta^v ; \xi^u) = \sum\limits_{j=1}^{N}\mathcal{H}_j(f;u,u_h,\xi^u).
\end{align}
For bilinear form $\mathcal{\tilde{B}}_j$, the following equation holds by projection properties \eqref{projection}
\begin{align}\label{eqn:Bwan_estimate}
\sum\limits_{j=1}^{N} \mathcal{\tilde{B}}_j(\xi^u - \eta^u, \xi^v - \eta^v ; \xi^u,\xi^v) &=
(\xi^u_t, \xi^u)_{I} - (\eta^u_t, \xi^u)_{I} + (\xi^v, \xi^u)_{I} - (\eta^v, \xi^u)_{I} \notag\\
&= (\xi^u_t, \xi^u)_{I} + \sum\limits_{j=1}^{N}\jump{(\xi^v)^2}_{j+\frac{1}{2}} - (\eta^v, \xi^u)_{I} \\
& = (\xi^u_t, \xi^u)_{I}  - (\eta^v, \xi^u)_{I} \notag
\end{align}

Combining the estimate equations \eqref{eqn:Bwan_estimate} and \eqref{eqn:H_estimate}, we obtain the final error estimate,
\begin{align}
&(\xi^u_t, \xi^u)_{I} + \frac{1}{4}\alpha(\hat{f};u_h)\jump{\xi^u}^2 \notag\\
&\leq  (\eta^v, \xi^u)_{I} + (C+C_*(\norm{\xi^u}_{L^{\infty}(I)} + h^{-1}\norm{u-u_h}^2_{L^{\infty}({I})}))\norm{\xi^u}^2_{L^{\infty}(I)}\notag \\
&+ (C + C_*h^{-1}\norm{u-u_h}^2_{L^{\infty}(I)}) h^{2k+1}.
\end{align}
Similar to the procedure followed in deriving the proof of Theorem \ref{thm1}, we have \eqref{eqn:thm2} in Theorem \ref{thm2}.
\end{proof}

\subsection{The Hamiltonian conservative DG scheme}\label{sec:Hamiltonian DG}
In this section, we construct another DG scheme, which can preserve the Hamiltonian spatially. Therefore,  we call it the Hamiltonian conservative DG scheme for the OV equation.

We rewrite the OV equation as another first order system
\begin{align}\label{ODE:H}
\begin{cases}
u_t + w_x  + v = 0, \\
w = \frac{1}{2} u^2, \\
v_x = u. 
\end{cases}
\end{align}
The Hamiltonian conservative DG scheme is defined as: Find numerical solutions $u_h, v_h, w_h\in V_h^k$, for all test functions $\phi, \varphi, \psi \in V_h^k$, such that
\begin{subnumcases}{\label{scheme:H_con}}
((u_h)_t,\phi)_{I_j} + <\widehat{w_h}, \phi>_{I_j}  - (w_h,\phi_x)_{I_j}  + (v_h, \phi)_{I_j} = 0, \label{scheme:H_con1}\\
(w_h,\varphi)_{I_j} = (\frac{1}{2} u_h^2,\varphi)_{I_j},\label{scheme:H_con2} \\
 <\widehat{v_h}, \psi>_{I_j} - (v_h, \psi_x)_{I_j} = (u_h, \psi)_{I_j}.\label{scheme:H_con3}
\end{subnumcases}
The numerical fluxes are taken as
\begin{align}\label{eqn:flux_H}
\widehat{w_h} = \average{w_h}, \quad \widehat{v_h} = \average{v_h}.
\end{align}

The difference between the Hamiltonian conservative DG scheme and energy stable DG scheme in section \ref{sec:energy DG} is an $L^2$ projection \eqref{scheme:H_con2}. Therefore, the implementation of algorithm is similar to Section \ref{sec:algorithm}, the flowchart is omitted here.

\begin{remark}
We can still deal   $v_x = u$ by directly integrating it, similar to the method followed in equation \eqref{scheme:energy_con_integration2} or \eqref{scheme:energy_con_integration3}. To avoid unnecessary duplication, we do not repeat the process.
\end{remark}

\begin{prop}
The semi-discrete DG scheme \eqref{scheme:H_con} with fluxes \eqref{eqn:flux_H} is a Hamiltonian conservative DG scheme that can preserve the Hamiltonian spatially
\begin{equation}
\frac{d}{dt} H(u_h,v_h) = \frac{d}{dt} \int_I -\frac{1}{6}u_h^3 + v_h^2 dx = 0.
\end{equation}
\end{prop}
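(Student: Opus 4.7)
The plan is to show that $\tfrac{d}{dt}\int_I(-\tfrac16 u_h^3)\,dx$ and $\tfrac{d}{dt}\int_I\tfrac12 v_h^2\,dx$ each reduce to $\pm(v_h,w_h)_I$, so they cancel (I take the Hamiltonian to be $H=\int_I(-\tfrac16 u_h^3+\tfrac12 v_h^2)\,dx$ as defined in the abstract; the $v_h^2$ in the statement should carry a $\tfrac12$). Every interelement contribution along the way is killed by the same central-flux identity: for any $q\in V_h^k$ under periodic or homogeneous Dirichlet boundary conditions,
\begin{equation*}
\sum_j\bigl[\widehat q\jump{q}-\tfrac12\jump{q^2}\bigr]_{j+\frac12}=0,
\end{equation*}
because $\widehat q=\average{q}$ and $\jump{q^2}=2\average{q}\jump{q}$.

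For the cubic term I take $\phi=w_h\in V_h^k$ in \eqref{scheme:H_con1}. Summed over $j$, the pair $<\widehat{w_h},w_h>_{I_j}-(w_h,(w_h)_x)_{I_j}$ collapses to the vanishing sum above, leaving $((u_h)_t,w_h)_I+(v_h,w_h)_I=0$. Testing the $L^2$ projection \eqref{scheme:H_con2} with $\varphi=(u_h)_t\in V_h^k$ then converts the first term via $(w_h,(u_h)_t)_I=(\tfrac12 u_h^2,(u_h)_t)_I=\tfrac{d}{dt}\int_I\tfrac16 u_h^3\,dx$, so
\begin{equation*}
\tfrac{d}{dt}\int_I\bigl(-\tfrac16 u_h^3\bigr)\,dx=(v_h,w_h)_I.
\end{equation*}

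For the quadratic term the plan is first to derive a ``master identity'' for the composite $(v_h)_t+w_h$. Time-differentiating \eqref{scheme:H_con3} and substituting $((u_h)_t,\psi)_{I_j}$ from \eqref{scheme:H_con1}, and using linearity of the central flux so that $\widehat{(v_h)_t}+\widehat{w_h}=\widehat{(v_h)_t+w_h}$, yields
\begin{equation*}
<\widehat{(v_h)_t+w_h},\psi>_{I_j}-((v_h)_t+w_h,\psi_x)_{I_j}=-(v_h,\psi)_{I_j}\qquad\forall\,\psi\in V_h^k,
\end{equation*}
which is a DG antiderivative relation for $-v_h$, the discrete shadow of the continuous identity $(v_t+w)_x=-v$. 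Setting $\psi=P:=(v_h)_t+w_h\in V_h^k$ and summing, the left-hand side is again $-\sum_j\widehat P\jump{P}+\tfrac12\sum_j\jump{P^2}=0$, so $(v_h,(v_h)_t+w_h)_I=0$, i.e.\ $\tfrac{d}{dt}\int_I\tfrac12 v_h^2\,dx=(v_h,(v_h)_t)_I=-(v_h,w_h)_I$. Adding the two identities cancels $(v_h,w_h)_I$ and produces $\tfrac{d}{dt}H=0$. The main obstacle is recognizing the master identity: once one sees that eliminating $(u_h)_t$ between the $t$-derivative of \eqref{scheme:H_con3} and \eqref{scheme:H_con1} turns $(v_h)_t+w_h$ into a DG antiderivative of $-v_h$, the self-test $\psi=(v_h)_t+w_h$ is structurally the same argument that testing \eqref{scheme:H_con3} with $\psi=v_h$ gives the orthogonality $(u_h,v_h)_I=0$.
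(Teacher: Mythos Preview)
Your proof is correct, and your observation about the missing $\tfrac12$ in front of $v_h^2$ is right: both the paper's own proof and the definition of $H$ in the introduction confirm the conserved quantity is $\int_I(-\tfrac16 u_h^3+\tfrac12 v_h^2)\,dx$.

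Your organization differs from the paper's in a way worth noting. The paper tests \eqref{scheme:H_con1} with $\phi=(v_h)_t$, which produces an unwanted cross-term $((u_h)_t,(v_h)_t)_{I_j}$; it then eliminates this by separately testing the time-differentiated \eqref{scheme:H_con3} with $\eta=(v_h)_t$ and substituting. You instead test \eqref{scheme:H_con1} with $\phi=w_h$, which immediately isolates the cubic contribution as $(v_h,w_h)_I$ with no cross-term, and then package the quadratic contribution through your ``master identity'' for $P=(v_h)_t+w_h$ --- the discrete analogue of $(v_t+w)_x=-v$ --- tested against itself. Both arguments rely on the same three ingredients (the central-flux telescoping identity, the projection \eqref{scheme:H_con2} tested with $(u_h)_t$, and the time derivative of \eqref{scheme:H_con3}), but your decomposition treats the two pieces of $H$ separately and avoids the extra substitution step. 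The paper's route is more mechanical; yours exposes the structural reason for conservation (the self-adjointness of the central-flux derivative acting on $P$ and on $w_h$) more transparently.
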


\begin{proof}
First, we take the time derivative of the equation \eqref{scheme:H_con3} to obtain
\begin{equation}\label{scheme:H_con4}
 <\widehat{(v_h)_t}, \eta>_{I_j} - ((v_h)_t, \eta_x)_{I_j} = ((u_h)_t, \eta)_{I_j}.
\end{equation}
As \eqref{scheme:H_con1},\eqref{scheme:H_con2},\eqref{scheme:H_con4} hold true for any test function in space $V_h^k$, we choose
\begin{equation}
\phi = (v_h)_t, \ \varphi = (u_h)_t, \  \eta = w_h.
\end{equation}
Using the selected fluxes and summing up the three above-mentioned equations \eqref{scheme:H_con1},\eqref{scheme:H_con2},\eqref{scheme:H_con4}, we have
\begin{align}\label{eqn:summation}
 ((u_h)_t,(v_h)_t)_{I_j} +& <\widehat{w_h}, (v_h)_t>_{I_j}  - (w_h,(v_h)_{tx})_{I_j}  + (v_h, (v_h)_t)_{I_j} \\
  +& <\widehat{(v_h)_t}, w_h>_{I_j} - ((v_h)_t, (w_h)_x)_{I_j}  - (\frac{1}{2} u_h^2,(u_h)_t)_{I_j} = 0. \notag
 \end{align}
 To eliminate the extra term $((u_h)_t,(v_h)_t)_{I_j}$, we take the test function $\eta = (v_h)_t$ in equation \eqref{scheme:H_con4}, and obtain
 \begin{equation}\label{scheme:H_con5}
 <\widehat{(v_h)_t}, (v_h)_t>_{I_j} - ((v_h)_t, (v_h)_{tx})_{I_j} = ((u_h)_t, (v_h)_t)_{I_j}.
\end{equation}
Substituting equation \eqref{scheme:H_con5} into \eqref{eqn:summation}, we finally get the following summation
\begin{align}
<\widehat{(v_h)_t}, (v_h)_t>_{I_j} - ((v_h)_t, (v_h)_{tx})_{I_j} + & <\widehat{w_h}, (v_h)_t>_{I_j}  - (w_h,(v_h)_{tx})_{I_j}  + (v_h, (v_h)_t)_{I_j} \\
  +& <\widehat{(v_h)_t}, w_h>_{I_j} - ((v_h)_t, (w_h)_x)_{I_j}  - (\frac{1}{2} u_h^2,(u_h)_t)_{I_j} = 0. \notag
\end{align}
We rewrite the above-mentioned equation into its equivalence form
\begin{align}\label{eqn:cell entropy_H}
(v_h, (v_h)_t)_{I_j}- (\frac{1}{2} u_h^2,(u_h)_t)_{I_j} + \Phi_{j+\frac{1}{2}}-\Phi_{j-\frac{1}{2}} + \Theta_{j-\frac{1}{2}} = 0
\end{align}
where the numerical entropy flux is given by
\begin{equation}
\Phi = \average{w_h}(v_h^-)_t  - \widehat{(v_h)_t}w_h^- - (v_h^-)_t w_h^- +\average{v_h}(v_h^-)  - \frac{1}{2}(v_h^-)^2
\end{equation}
and the extra term $\Theta$ is
\begin{equation}
\begin{split}
\Theta =  -\average{w_h}\jump{(v_h)_t} - \average{(v_h)_t}\jump{w_h}-& (w_h(v_h)_t)^- + (w_h(v_h)_t)^+ \\
+&\average{v_h}\jump{v_h}- \frac{1}{2}(v^-_h)^2 + \frac{1}{2}(v^+_h)^2  =  0.
\end{split}
\end{equation}
Summing up the cell entropy equalities \eqref{eqn:cell entropy_H} with periodic or homogeneous Dirichlet boundary condition, the Hamiltonian conservation is proved
\begin{equation}
(-\frac{1}{2}(u_h)^2,(u_h)_t)_{I} + (v_h,(v_h)_t)_{I} = 0,
\end{equation}
i.e., Hamiltonian conservative DG scheme for the OV equation.
\end{proof}

\section{The DG methods via the hodograph transformation}
In this section, we solve the singular solutions of the OV equation \eqref{Eqn:OV} by transforming it into a new coupled dispersionless type equation (CD system). This type of a method that solves numerical solutions by hodograph transformations is also applied in \cite{Zhang2019_arxiv} for the short pulse equation. Similar to the method followed in \cite{Zhang2019_arxiv}, a DG scheme and an integration DG scheme are constructed for the CD system. After obtaining the numerical solutions of the CD system, the profiles of solutions for the OV equation are obtained.

Through the hodograph transformation
\begin{equation}\label{eqn:Hodograph}
dx = \frac{1}{\rho} dy + u ds,\ dt =ds,
\end{equation}
we link the OV equation \eqref{Eqn:OV} with a new type CD system
\begin{align}\label{eqn:New_CD}
\begin{cases}
(\rho^{-1})_s = u_y, \\
 \rho u_{ys} + \gamma u = 0.
\end{cases}
\end{align}
Additionally, the same hodograph transformation can be applied to the two component OV system
\begin{align}\label{eqn:2Ostrovsky}
\begin{cases}
(u_{t} +uu_x)_x + \gamma u = c(1-\rho),\\
\rho_t + (\rho u)_x = 0. 
\end{cases}
\end{align}
What we need to solve here is the following CD system
\begin{align}
\begin{cases}
(\rho^{-1})_s = u_y, \\
 \rho u_{ys} + \gamma u+c(\rho-1) = 0.
\end{cases}
\end{align}
When $c = 0$, the two component OV system will degenerate to the OV equation.
For the sake of expression, we make $q = 1/\rho$, then we have
\begin{equation}\label{eqn:solve_reduced_O}
\begin{cases}
q_s = u_y,\\
u_{ys} + \gamma qu + c(1-q) = 0.
\end{cases}
\end{equation}
\subsection{The DG schemes for the CD system }
In this section, two DG schemes are constructed for the CD system \eqref{eqn:solve_reduced_O}, including the DG scheme and the integration DG scheme, the specific forms of which will be provided in Scheme 1 and Scheme 2, respectively.

We rewrite \eqref{eqn:solve_reduced_O} as a first order system
\begin{equation}\label{eqn:ODE_CD}
\begin{cases}
q_s = \omega , \\
\omega_s = -\gamma qu - c(1-q) , \\
\omega = u_y.
\end{cases}
\end{equation}

$\mathbf{Scheme \ 3}$ : The DG scheme for the CD system \eqref{eqn:solve_reduced_O} is defined as follows: Find $q_h, u_h,\omega_h \in V^k_h$, such that,
\begin{subnumcases}{\label{eqn:DG_reduce_O}}
((q_h)_s, \phi)_{I'_j}  = (\omega_h,\phi)_{I'_j}, \label{eqn:DG_reduce_O_1}\\
((\omega_h)_s,\varphi)_{I'_j} = -(\gamma q_h u_h, \varphi)_{I'_j} - (c(1-q_h),\varphi)_{I'_j} , \label{eqn:DG_reduce_O_2}\\
 (\omega_h,\psi)_{I'_j} = <\widehat{u_h}, \psi>_{I'_j} - (u_h, \psi_y)_{I'_j}, \label{eqn:DG_reduce_O_3}
\end{subnumcases}
for all test functions $\phi,\varphi,\psi \in V_h^k$. Here, the numerical flux is $\widehat{u_h} = u_h^+$.
After solving the numerical solution of the CD system, we can finally profile the singular solutions of the OV equation.

$\mathbf{Scheme \ 4}$:
Under the same DG framework \eqref{eqn:ODE_CD}, we use integration scheme deal with the equation $ u_y = \omega$. Here, we construct the integration DG scheme for the CD system \eqref{eqn:solve_reduced_O}: Find $q_h,\omega_h \in V_h^k$, $u_h \in V^{k+1}$, such that
\begin{subnumcases}{\label{eqn:DG_reduce_O_integration}}
((q_h)_s, \phi)_{I'_j}  = (\omega_h,\phi)_{I'_j}, \label{eqn:DG_reduce_O_integration1}\\
((\omega_h)_s,\varphi)_{I'_j} =  -(\gamma q_h u_h, \varphi)_{I'_j} - (c(1-q_h),\varphi)_{I'_j}, \label{eqn:DG_reduce_O_integration2}\\
u_h(y,s)\mid_{I_j}  = u_h(y_{j+\frac{1}{2}},s) - \int_y^{y_{j+\frac{1}{2}}}\omega_h(\xi,s) \ d\xi, \label{eqn:DG_reduce_O_integration3}
\end{subnumcases}
for all test functions $\phi,\varphi \in V_h^k$. Here, the boundary condition is taken as $u_h(y_{N+\frac{1}{2}},s) = u(b,s)$. The primary difference between this integration DG scheme \eqref{eqn:DG_reduce_O_integration}and the DG scheme  \eqref{eqn:DG_reduce_O} is the finite element space that the numerical solution $u_h$ belongs to. In this case, not only $u_h$ is in $V_h^{k+1}$ space, but $u_h$ is continuous. Numerically, this integration DG scheme can achieve $(k+2)$-$th$ order of accuracy for $u_h$, and $(k+1)$-$th$ order for $q_h, \omega_h$.

\subsection{Algorithm flowchart}
In this section, the processes of Scheme 3 and Scheme 4 are listed as follows:

 {\bf{Step 1}} : From the equations \eqref{eqn:DG_reduce_O_1},\eqref{eqn:DG_reduce_O_2}, we have
\begin{align*}
(\mathbf{q}_h)_s = \mathbf{Res}(\bm{\omega}_h),\\
(\bm{\omega}_h)_s = \mathbf{Res}(\mathbf{u}_h, \mathbf{q}_h).
\end{align*}
 The vectors $\mathbf{u}_h, \mathbf{q}_h,\bm{\omega}_h$ denote the freedoms of numerical solutions $u_h, q_h,\omega_h$.   TVD/SSP Runge-Kutta method is used for solving ${\omega}_h, {q}_h$.

{\bf{Step 2}} :
From \eqref{eqn:DG_reduce_O_3} or \eqref{eqn:DG_reduce_O_integration3}, the coefficients of $u_h$ can be solved from $
\omega_h$. The specific procedures we have illustrated in Section \ref{sec:algorithm}, we do not list further details here.

\section{Numerical Experiments}
In this section, some numerical experiments are presented to show the convergence rate and capability of our numerical schemes. The time discretization method is the TVD/SSP Runge-Kutta method \cite{1988_Shu_JCP, Gottlieb_2001_SIAM}. We take the time step as $\Delta t = 0.1\Delta x $ with our uniform spatial meshes for all experiments. Different solutions of the OV equation are calculated in this part, including not only smooth, shock solution, but peakon, cuspon and loop soliton solutions.

\begin{example}\label{ex:smooth}$\bf{ Smooth\ solution}$

In this example, a smooth solution is used to test the accuracy and convergence rate of our numerical schemes with periodic boundary condition. The initial condition is taken as
\begin{equation}
u_0(x) = \sin(x), \ x\in [0,2\pi].
\end{equation}
We fix the exact solution as
\begin{align}\label{solution:smooth}
u(x,t) = \sin(x+t),
\end{align}
we add a source term $f = \cos{2(x+t)}$ to make sure the equation holds, that is,
\begin{equation}
(u_t + (\frac{1}{2}u^2)_x)_x  + u = f. \\
\end{equation}
We record the errors, orders of accuracy at time $T = 1$ for two DG schemes in Table \ref{tab: acc_test}. For the energy stable DG scheme, the convergence rate of $L^2$ and $L^{\infty}$ error is $(k+1)$-$th$ order for the variable $u$.  For the Hamiltonian conservative DG scheme, there is $k$-$th$ order of accuracy for odd $k$, and $(k+1)$-$th$ order for even $k$. In Table \ref{tab: acc_testv}, we compare the energy stable DG scheme and integration DG
 scheme on the variable $v$, the integration DG scheme is one order higher than the DG scheme on  the variable $v$. However, the final numerical solution $u$ belongs to space $V_h^k$, therefore, the convergence rate for variable $u$ is still $(k+1)$-$th$ order rather than $(k+2)$-$th$. In the subsequent examples, we do not emphasize the differences between the two aforementioned schemes.

\begin{table}[!htp]
\begin{center}
\begin{tabular}{|c|c|cccc|cccc|}
  \hline

          &   N       &$ \norm{u-u_h}_{L^2}$    & order   &$ \norm{u-u_h}_{L^\infty}$   & order
                     &$ \norm{u-u_h}_{L^2}$    & order   &$ \norm{u-u_h}_{L^\infty}$   & order\\\hline
           &           &  \multicolumn{4}{c|}{The energy stable DG scheme} & \multicolumn{4}{c|}{The Hamiltonian conservative DG scheme}\\\hline

 $P^1$  &  20  &1.91E-03 &-- &1.33E-02 &--
                 &2.17E-02 &-- &2.45E-01 &--\\

          &  40  &4.74E-04 &2.01 &3.35E-03 &1.99
                 &1.32E-02 &0.72 &1.44E-01 &0.77\\

&  80  &1.18E-04 &2.00 &8.38E-04 &2.00
&7.48E-03 &0.82 &1.32E-01 &0.12\\

& 160  &2.95E-05 &2.00 &2.10E-04 &2.00
&3.94E-03 &0.92 &8.40E-02 &0.65\\

& 320  &7.37E-06 &2.00 &5.24E-05 &2.00
&2.00E-03 &0.98 &4.87E-02 &0.79\\\hline

$P^2$&  20  &1.02E-04 &--  &8.00E-04 &--
            &1.58E-04 &-- &1.10E-03 &--\\

&  40  &1.44E-05 &2.82 &1.14E-04 &2.81
&1.33E-05 &3.57 &1.06E-04 &3.39\\

&  80  &1.93E-06 &2.90 &1.90E-05 &2.58
&1.21E-06 &3.46 &1.04E-05 &3.34\\

& 160  &2.56E-07 &2.91 &3.00E-06 &2.66
&1.16E-07 &3.38 &1.08E-06 &3.27\\

& 320  &3.53E-08 &2.86 &4.06E-07 &2.88
&1.19E-08 &3.28 &1.08E-07 &3.32\\\hline

\end{tabular}
\end{center}
\caption{\label{tab: acc_test} Example \ref{ex:smooth}, accuracy test for smooth solution $u$ \eqref{solution:smooth} at $T =1$. }
\end{table}

\begin{table}[!htp]
\begin{center}
\begin{tabular}{|c|c|cccc|cccc|}
  \hline

          &   N       &$ \norm{v-v_h}_{L^2}$    & order   &$ \norm{v-v_h}_{L^\infty}$   & order
                     &$ \norm{v-v_h}_{L^2}$    & order   &$ \norm{v-v_h}_{L^\infty}$   & order\\\hline
           &           &  \multicolumn{4}{c|}{The energy stable DG scheme} & \multicolumn{4}{c|}{The energy stable integration DG scheme}\\\hline

 $P^1$&  20  &1.68E-03 &-- &7.80E-03 &--
             &1.56E-04 &-- &7.64E-04 &--\\
       &  40  &3.80E-04 &2.14 &1.71E-03 &2.19
              &1.97E-05 &2.99 &9.76E-05 &2.97\\
       &  80  &9.22E-05 &2.04 &3.94E-04 &2.12
              &2.46E-06 &3.00 &1.22E-05 &3.00\\
       & 160  &2.30E-05 &2.01 &9.39E-05 &2.07
              &3.08E-07 &3.00 &1.53E-06 &3.00\\
       & 320  &5.75E-06 &2.00 &2.29E-05 &2.04
              &3.85E-08 &3.00 &1.90E-07 &3.00\\\hline

$P^2$  &  20  &5.13E-05 &-- &2.62E-04 &--
              &6.57E-06 &-- &4.82E-05 &--\\

       &  40  &4.73E-06 &3.44 &2.39E-05 &3.46
              &4.41E-07 &3.90 &3.25E-06 &3.89\\
       &  80  &5.76E-07 &3.04 &2.91E-06 &3.04
              &2.62E-08 &4.07 &2.32E-07 &3.81\\
       & 160  &7.29E-08 &2.98 &3.76E-07 &2.95
              &1.60E-09 &4.03 &1.38E-08 &4.07\\
       & 320  &9.15E-09 &2.99 &4.66E-08 &3.01
              &1.09E-10 &3.88 &1.02E-09 &3.76\\\hline

\end{tabular}
\end{center}
\caption{\label{tab: acc_testv} Example \ref{ex:smooth}, accuracy test for the derivative of solution $u_x$ \eqref{solution:smooth} at $T =1$. }
\end{table}

\end{example}

\begin{example}\label{ex:shock} $\bf{ Shock \ solution}$

In this example, we consider the smooth initial data with $\gamma = -1$
\begin{equation}\label{solution: cos}
u_0(x) = -0.05\cos(2\pi x), \ x\in[0,1],
\end{equation}
which will develop a shock in finite time. To eliminate the oscillation near the shock, we follow the idea of handling the shock solutions of conservation laws  \cite{1989_Cockburn_JCP} to introduce a TVB limiter. Our numerical scheme can capture the shock without oscillation, see Figure \ref{fig:cos_shock}. Due to the similarity between the two energy stable schemes, we simply provide the approximation results of the energy stable DG scheme here.

\begin{figure}[!htb]
\begin{center}
\begin{tabular}{cc}
\includegraphics[width=0.45\textwidth]{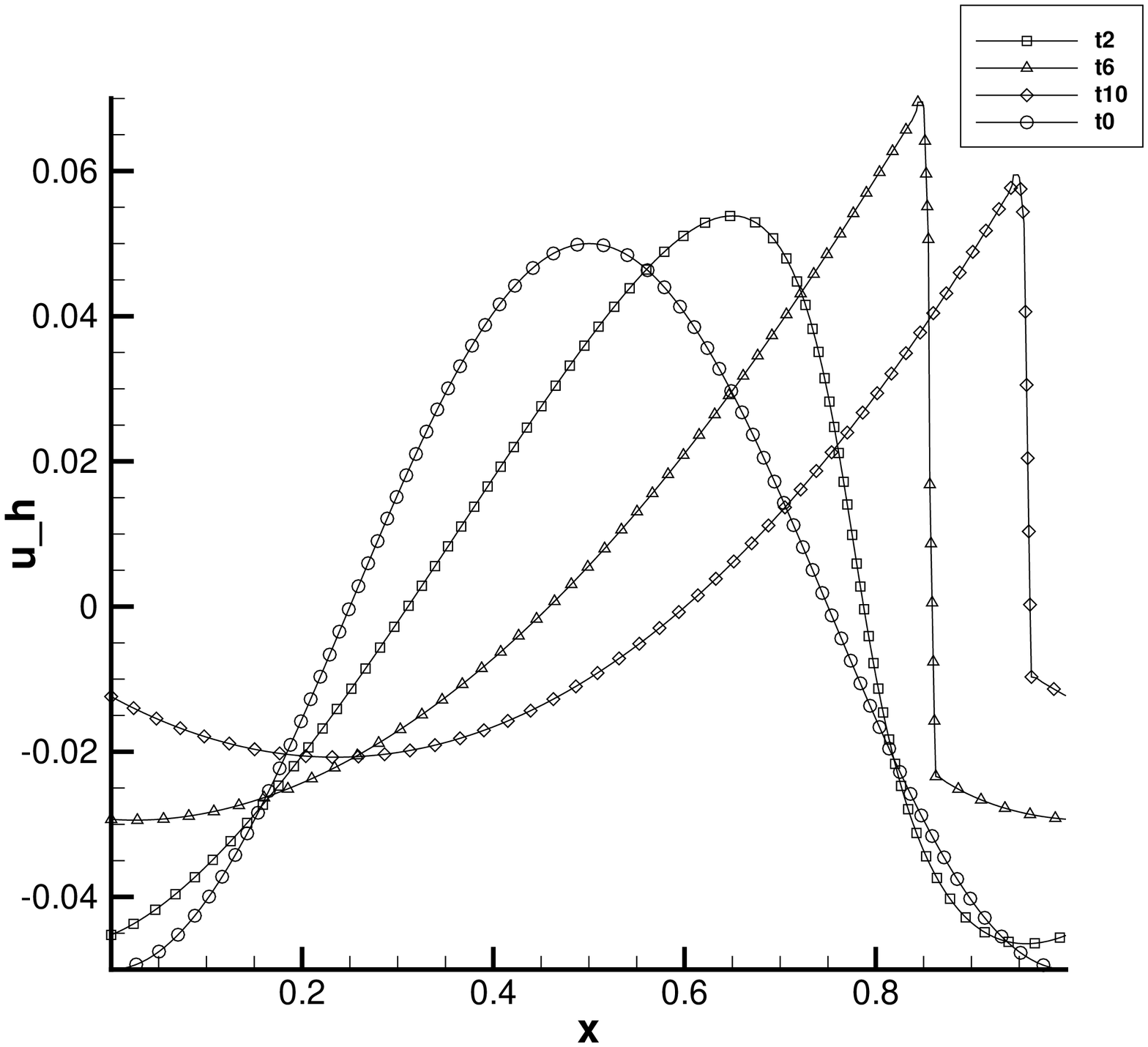}&\includegraphics[width=0.45\textwidth]{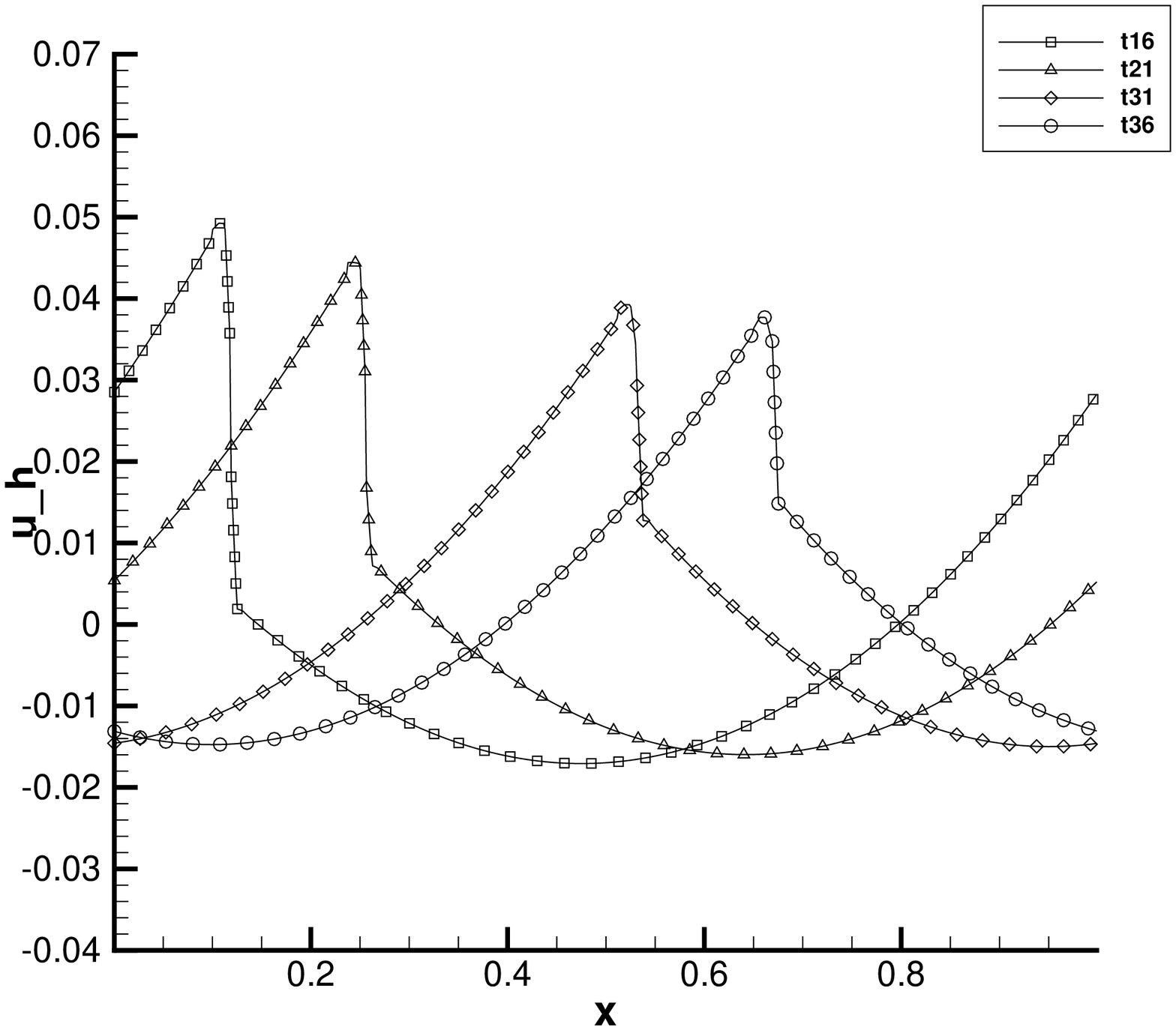}\\
\end{tabular}
\end{center}
\caption{\label{fig:cos_shock} Example \ref{ex:shock}, the process of cosine initial condition \eqref{solution: cos} at different times $T = 0$ to $T=36$ with the cells $N= 160$, $P^2$ elements. }
\end{figure}

\begin{example}\label{ex:peakon} $\bf{ Peakon \ solution}$

This example is devoted to solve a well-known traveling wave solution of the OV equation with $\gamma = -1 $. We call the corner wave whose first order derivative is finite discontinuous as a peakon solution which is the limit case of a family of smooth traveling wave solution \cite{Hunter1990_LAM, Parkes_2007_CSF, 2006_Stepanyants_CSF}. The initial data is given by
\begin{align}\label{conservative_flux}
 u_0(x) = \begin{cases}  \frac{1}{6}(x-\frac{1}{2})^2 + \frac{1}{6}(x-\frac{1}{2}) + \frac{1}{36}, \; & x\in[0,\frac{1}{2}], \\
  \frac{1}{6}(x-\frac{1}{2})^2 - \frac{1}{6}(x-\frac{1}{2}) + \frac{1}{36},\; &x\in[\frac{1}{2},1],\end{cases},
\end{align}
and the exact solution is
\begin{align}\label{solution:peakon}
u(x,t) = u_0\big(x-\frac{t}{36}\big).
\end{align}
The solution at time T = 36 will return to its initial state after a period. First, the $L^2, L^\infty$ error and convergence rate of the energy stable DG scheme are contained in Table \ref{tab: acc_test_corner}. Because of the lack of smoothness for the peakon solution, the convergence is first order for $L^2$ norm, $\frac{1}{2}$-$th$ order for $L^\infty$ norm which validates the results in \cite{2017_Coclite_BIT}. In Figure \ref{fig:corner}, two numerical solutions compare very well with the exact solution.

\begin{table}[!htp]
\begin{center}
\begin{tabular}{|c|c|cccc|}
  \hline
        &N       &$ \norm{u-u_h}_{L^2}$    & order   &$ \norm{u-u_h}_{L^\infty}$   & order \\\hline
$P^1$ &  20  &1.86E-04 &-- &1.98E-03 &-- \\

&  40  &9.96E-05 &0.90 &1.27E-03 &0.65 \\

&  80  &4.65E-05 &1.10 &8.07E-04 &0.65 \\

& 160  &2.10E-05 &1.15 &5.00E-04 &0.69\\\hline

$P^2$ &  20  &7.30E-05 &-- &6.78E-04 &-- \\

&  40  &3.22E-05 &1.18 &4.38E-04 &0.63 \\

&  80  &1.43E-05 &1.18 &3.11E-04 &0.49 \\

& 160  &6.22E-06 &1.20 &2.04E-04 &0.61 \\\hline
\end{tabular}
\end{center}
\caption{\label{tab: acc_test_corner} Example \ref{ex:peakon}, accuracy test of the energy stable DG scheme for peakon solution \eqref{solution:peakon} at $T = 36$. }
\end{table}

\begin{figure}[!htb]
\begin{center}
\begin{tabular}{c}
\includegraphics[width=0.75\textwidth]{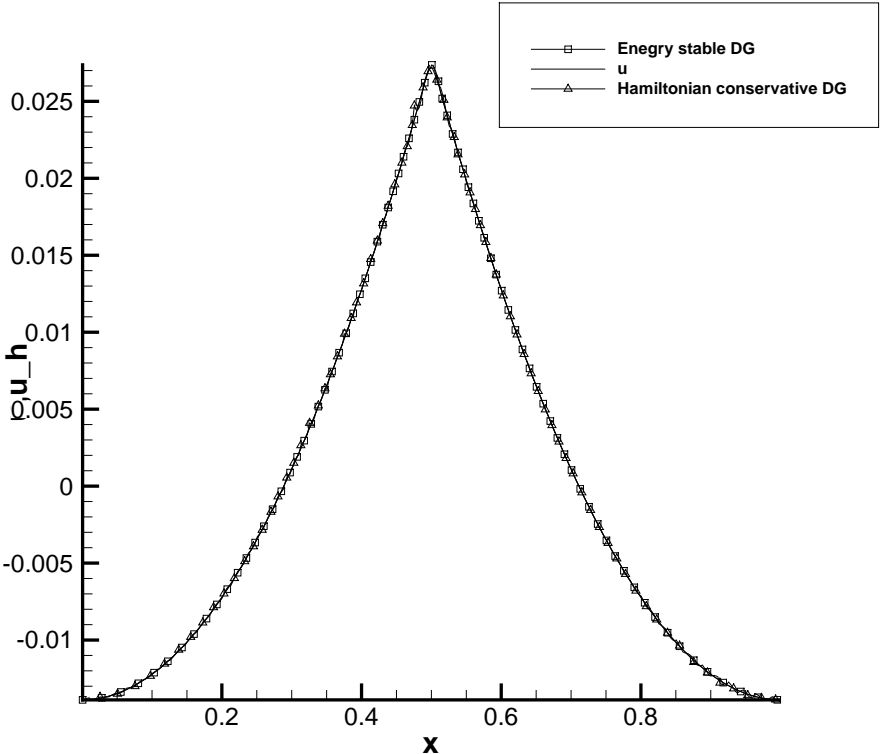}\\
\end{tabular}
\end{center}
\caption{\label{fig:corner} Example \ref{ex:peakon}, Peakon solution \eqref{solution:peakon} at T = 36 with the cells $N = 80$, $P^2$ element. }
\end{figure}

\end{example}

\end{example}

\begin{example} \label{ex:loop}{\bf Loop  and   cuspon  soliton  solutions}

This example is devoted to solve the loop and cuspon solutions for the OV equation and the two component OV system with $\gamma = -3$,
\begin{align}\label{eqn:2Ostrovsky}
\begin{cases}
(u_{t} +uu_x)_x - 3u = c(1-\rho),\\
\rho_t + (\rho u)_x = 0.
\end{cases}
\end{align}
where $c=0$, the system degenerates to the OV equation. We provide the exact solution of the OV system under the coordinate $(y,s)$,
\begin{equation}
\begin{split}
&u(y,s) = -2\ln(f)_{ss}, \\
&\rho(y,s) = (1-2(\ln f)_{ys})^{-1}, \\
&x = y - 2(\ln f)_s, \ t = s,
\end{split}
\end{equation}
which expresses the $N$ soliton solution, $f$ is the Pfaffian polynomial.

First, we use the one-soliton solution to test the error and the convergence rate,
\begin{equation}\label{reduced_soliton1}
f = 1 + e^{\eta_1}, \ \eta_1 = k_1s + \frac{3k_1}{k_1^2-c} +\eta_{10}
\end{equation}
where $k_1 = 1.0,c = 2.0$ are constants. The $L^2$, $L^{\infty}$ errors and the convergence rates of two DG methods are listed in Table \ref{tab:Ostrovsky1}, \ref{tab:Ostrovsky2}. We see that optimal error order can be both achieved for these two DG schemes \eqref{eqn:DG_reduce_O} and \eqref{eqn:DG_reduce_O_integration}.

\begin{table}[!htb]
\caption{\label{tab:Ostrovsky1} Example \ref{ex:loop}, the DG scheme:  Accuracy test for the one-soliton solution \eqref{reduced_soliton1} of the CD system \eqref{eqn:ODE_CD} at $T = 1$, the computational domain is $[-20,20]$, $k_1 = 1.0,c = 2.0$. }
\centering
\begin{tabular}{|c|c|cccc|cccc|}
  \hline
           & N       &$ \norm{u-u_h}_{L^2}$    & order   &$ \norm{u-u_h}_\infty$   & order
                     &$ \norm{\rho-\rho_h}_{L^2}$    & order   &$ \norm{\rho-\rho_h}_\infty$   & order\\\hline

$P^2$       &  20  &6.50E-03 &-- &7.13E-02 &--
 &2.04E-02 &-- &2.93E-01 &-- \\

&  40  &1.14E-03 &2.51 &2.54E-02 &1.49
 &2.32E-03 &3.14 &4.69E-02 &2.64 \\

&  80  &1.43E-04 &3.00 &2.55E-03 &3.31
 &4.82E-04 &2.27 &8.73E-03 &2.42 \\

& 160  &2.05E-05 &2.80 &4.17E-04 &2.62
 &6.38E-05 &2.92 &1.44E-03 &2.60 \\

& 320  &2.57E-06 &2.99 &5.20E-05 &3.00
 &7.87E-06 &3.02 &1.75E-04 &3.04 \\\hline

$P^3$ &  20  &2.73E-03 &-- &3.56E-02 &--
 &6.81E-03 &-- &8.69E-02 &-- \\

&  40  &2.69E-04 &3.34 &5.22E-03 &2.77
 &9.20E-04 &2.89 &1.44E-02 &2.59 \\

&  80  &2.53E-05 &3.41 &5.00E-04 &3.38
 &7.65E-05 &3.59 &1.50E-03 &3.26 \\

& 160  &1.35E-06 &4.22 &3.62E-05 &3.79
 &4.12E-06 &4.21 &1.04E-04 &3.86 \\

& 320  &8.51E-08 &3.99 &2.39E-06 &3.92
 &2.58E-07 &4.00 &7.10E-06 &3.87 \\\hline
\end{tabular}
\end{table}

\begin{table}[!htb]
\caption{\label{tab:Ostrovsky2} Example \ref{ex:loop}, the integration DG scheme: Accuracy test for the one-soliton solution \eqref{reduced_soliton1} of the CD system \eqref{eqn:ODE_CD} at $T = 1$, the computational domain is $[-20,20]$, $k_1 = 1.0,c = 2.0$. }
\centering
\begin{tabular}{|c|c|cccc|cccc|}
  \hline
           & N       &$ \norm{u-u_h}_{L^2}$    & order   &$ \norm{u-u_h}_\infty$   & order
                     &$ \norm{\rho-\rho_h}_{L^2}$    & order   &$ \norm{\rho-\rho_h}_\infty$   & order\\\hline
$P^2$   &  40  &4.59E-04 &-- &7.01E-03 &--
 &3.89E-03 &-- &5.44E-02 &-- \\

&  80  &2.31E-05 &4.31 &4.30E-04 &4.03
 &5.39E-04 &2.85 &9.38E-03 &2.54 \\

& 160  &2.59E-06 &3.16 &6.49E-05 &2.73
 &6.10E-05 &3.14 &1.21E-03 &2.95 \\

& 320  &1.65E-07 &3.97 &4.73E-06 &3.78
 &7.71E-06 &2.98 &1.58E-04 &2.94 \\

 & 640  &1.03E-08 &3.99 &3.02E-07 &3.97
 &9.66E-07 &3.00 &1.97E-05 &3.01 \\\hline

$P^3$ &  40  &7.55E-05 &-- &1.26E-03 &--
 &7.90E-04 &-- &9.93E-03 &-- \\

&  80  &6.90E-06 &3.45 &1.30E-04 &3.28
 &3.36E-05 &4.55 &5.62E-04 &4.14 \\

& 160  &1.74E-07 &5.31 &4.19E-06 &4.96
 &4.02E-06 &3.06 &9.16E-05 &2.62 \\

& 320  &5.52E-09 &4.98 &1.44E-07 &4.86
 &2.55E-07 &3.98 &7.06E-06 &3.70 \\

 & 640  &1.74E-10 &4.98 &4.50E-09 &5.00
 &1.60E-08 &4.00 &4.54E-07 &3.96 \\\hline

\end{tabular}
\end{table}

Next, we list the expression of the two-soliton solution
\begin{equation}\label{reduced_soliton2}
\begin{split}
&f = 1 + e^{\eta_1} +e^{\eta_2} + b_{12}e^{\eta_1 + \eta_2}, \\
& \eta_i = k_is + \frac{3k_i}{k_i^2-c}y + \eta_{i0},\\
&b_{12} = \frac{(k_1-k_2)^2(k_1^2-k_1k_2+k_2^2-3c)}{(k_1+k_2)^2(k_1^2+k_1k_2+k_2^2-3c)}.
\end{split}
\end{equation}
Figure \ref{fig:reduce2_O_2cuspon_u} and \ref{fig:reduce2_O_2cuspon_p} display the elastic collision between two cuspon solitons. Referring to \cite{Feng_2017_JPMT}, the shape of solution depends on the choice of parameters $k_i$. In Figure \ref{fig:reduce_O_2loop}, we provide a 2-loop solution for the OV equation. It can be seen that our numerical schemes have good resolutions for the cuspon and loop solition solutions of the OV equation or the OV system.

\begin{figure}[!htp]
\begin{minipage}{0.49\linewidth}
  \centerline{\includegraphics[width=1.1\textwidth]{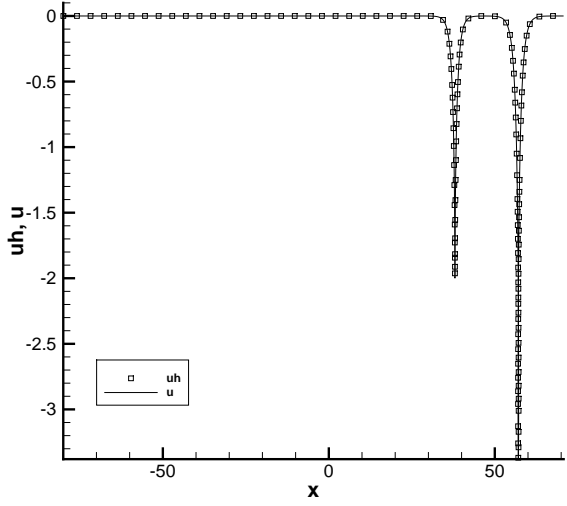}}
  \centerline{(a) t = 0.0}
\end{minipage}
\hfill
\begin{minipage}{0.49\linewidth}
  \centerline{\includegraphics[width=1.1\textwidth]{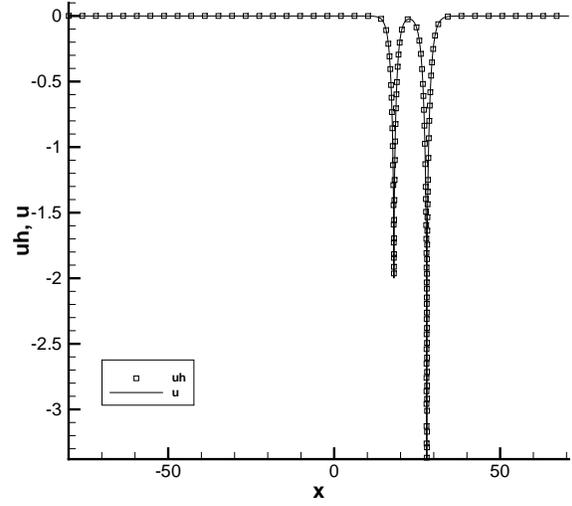}}
  \centerline{(b) t = 10.0}
\end{minipage}
\vfill
\begin{minipage}{0.49\linewidth}
  \centerline{\includegraphics[width=1.1\textwidth]{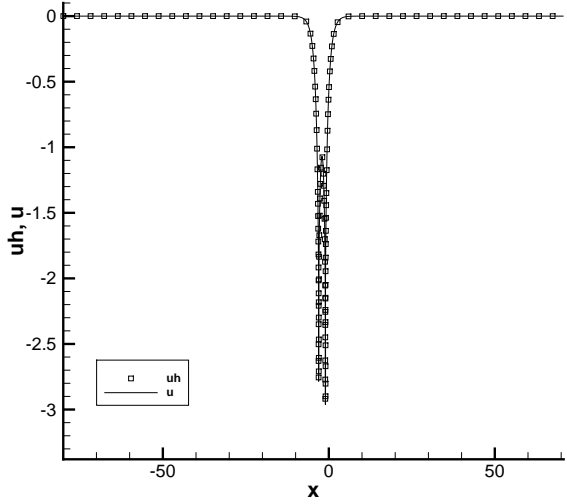}}
  \centerline{(c) t = 20.0}
\end{minipage}
\hfill
\begin{minipage}{0.49\linewidth}
  \centerline{\includegraphics[width=1.1\textwidth]{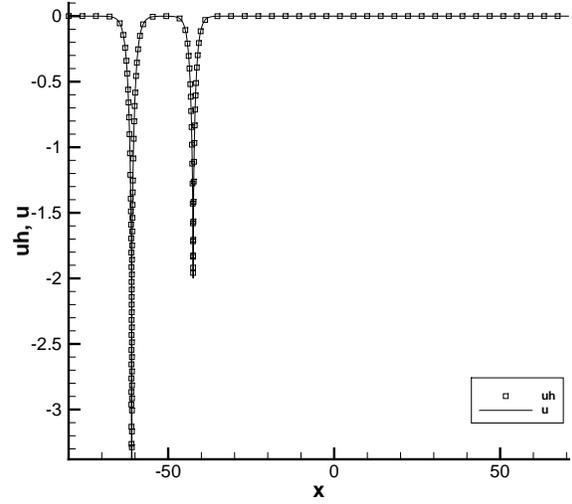}}
  \centerline{(d) t =40.0}
\end{minipage}
\caption{ \label{fig:reduce2_O_2cuspon_u} Example \ref{ex:loop}, the two-cuspon solution $u$ of the OV system \eqref{eqn:2Ostrovsky} with the cells $N = 320$, $P^2$ elements: The parameters are $k_1 = 2.0$, $k_2$ = $2.6$, $c = -2.0, \eta_{i0} = - 20k_i. $}
\label{fig:2loop_CD}
\end{figure}

\begin{figure}[!htb]
\begin{minipage}{0.49\linewidth}
  \centerline{\includegraphics[width=1.1\textwidth]{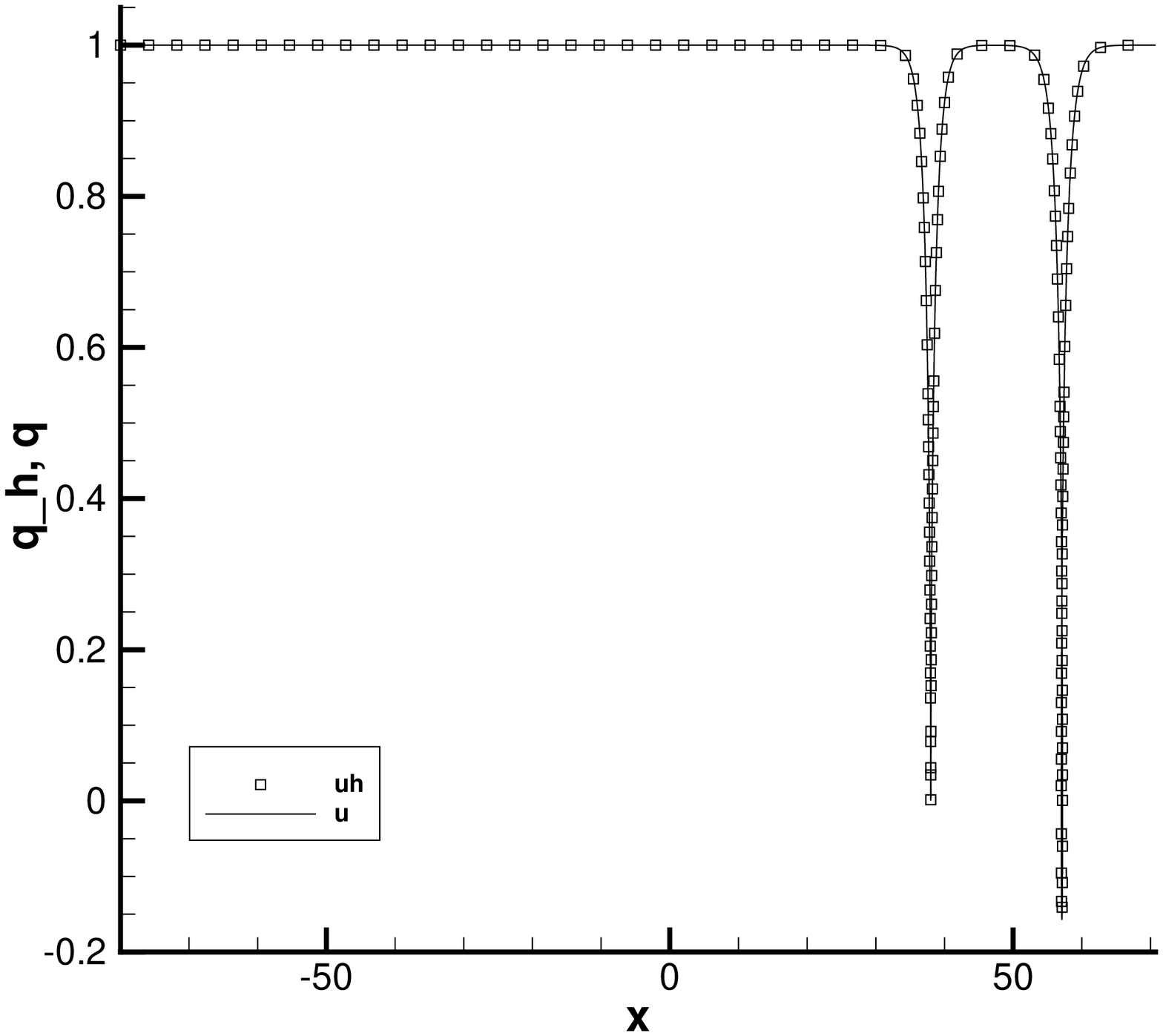}}
  \centerline{(a) t = 0.0}
\end{minipage}
\hfill
\begin{minipage}{0.49\linewidth}
  \centerline{\includegraphics[width=1.1\textwidth]{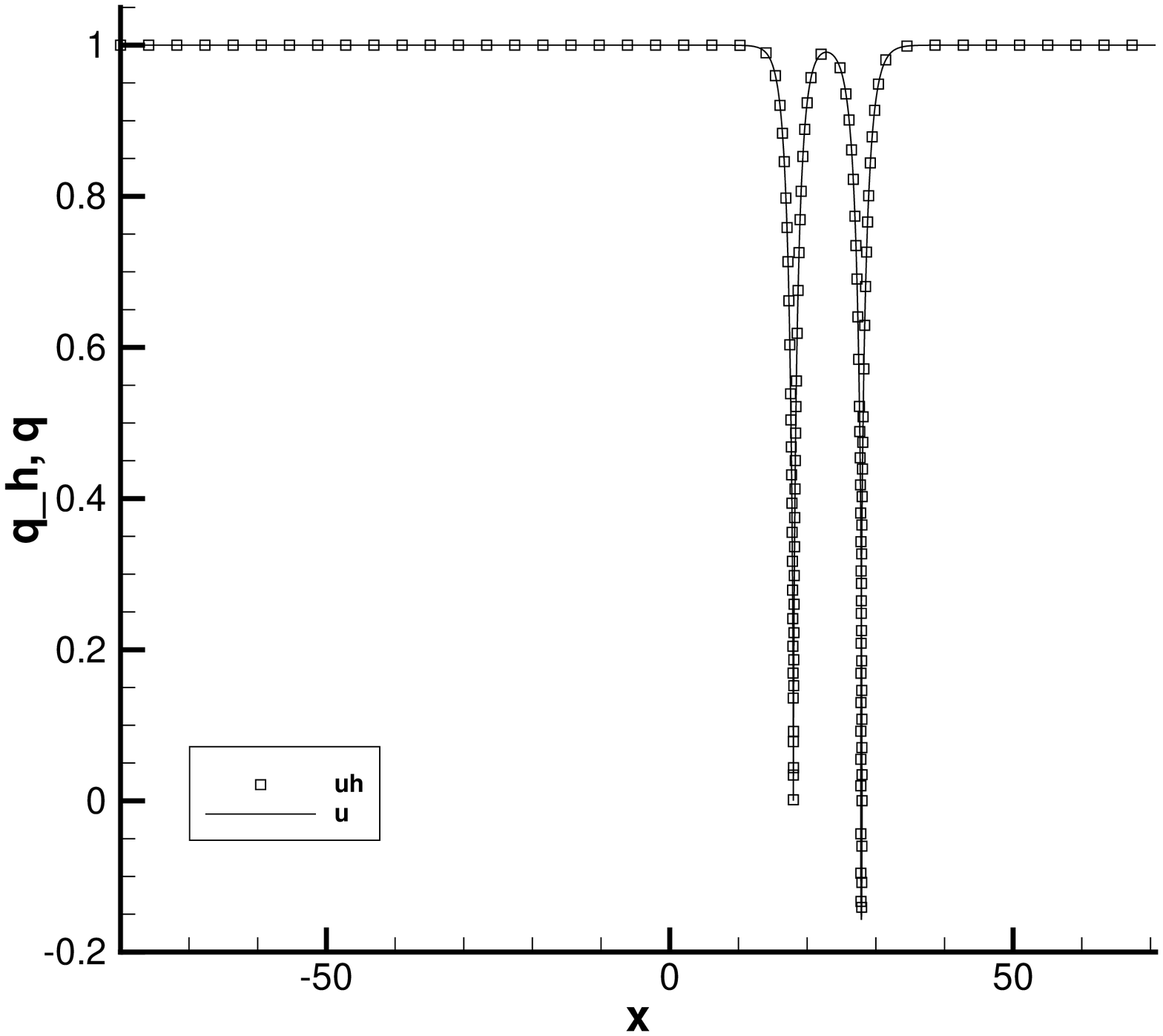}}
  \centerline{(b) t = 10.0}
\end{minipage}
\vfill
\begin{minipage}{0.49\linewidth}
  \centerline{\includegraphics[width=1.1\textwidth]{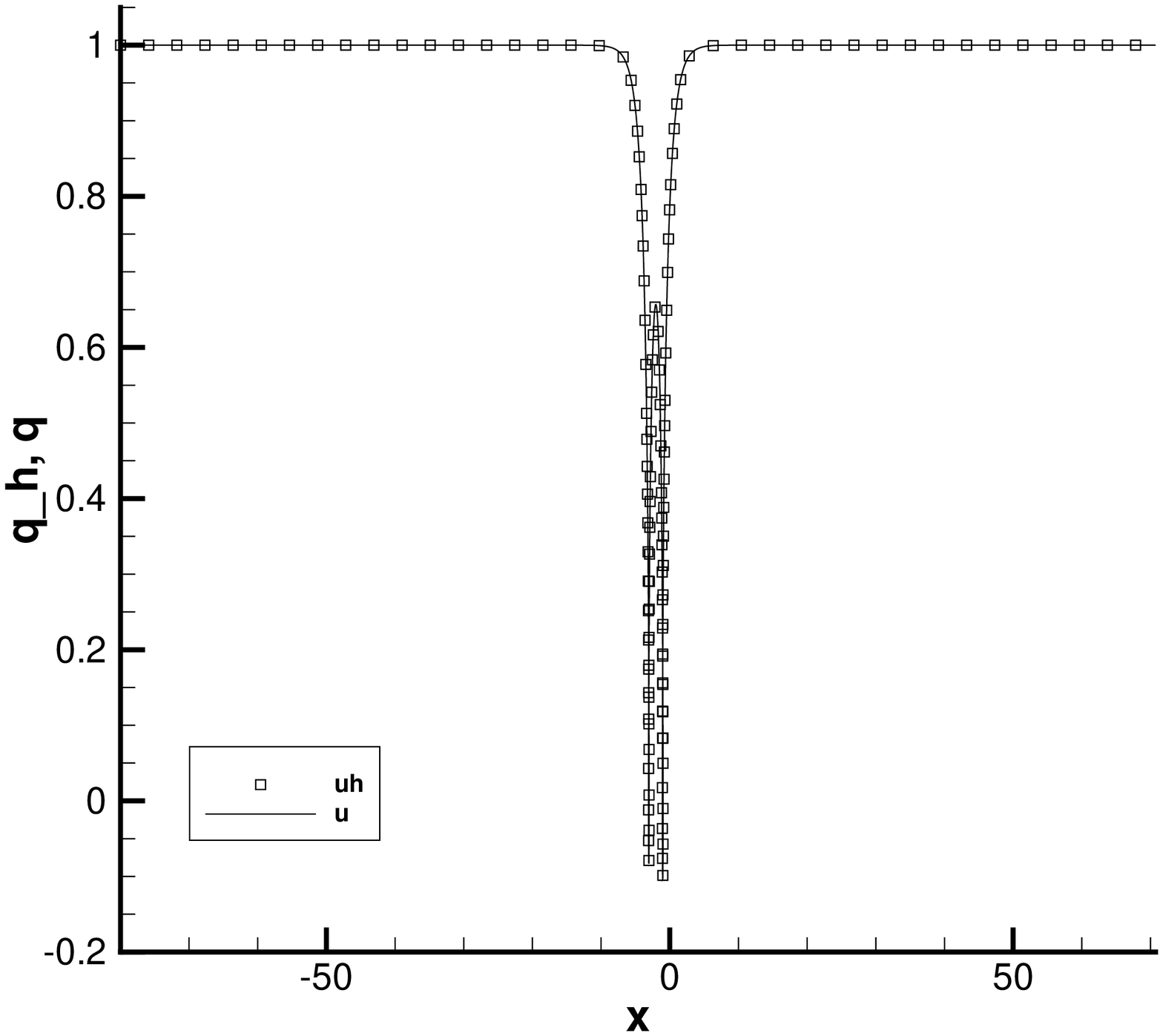}}
  \centerline{(c) t = 20.0}
\end{minipage}
\hfill
\begin{minipage}{0.49\linewidth}
  \centerline{\includegraphics[width=1.1\textwidth]{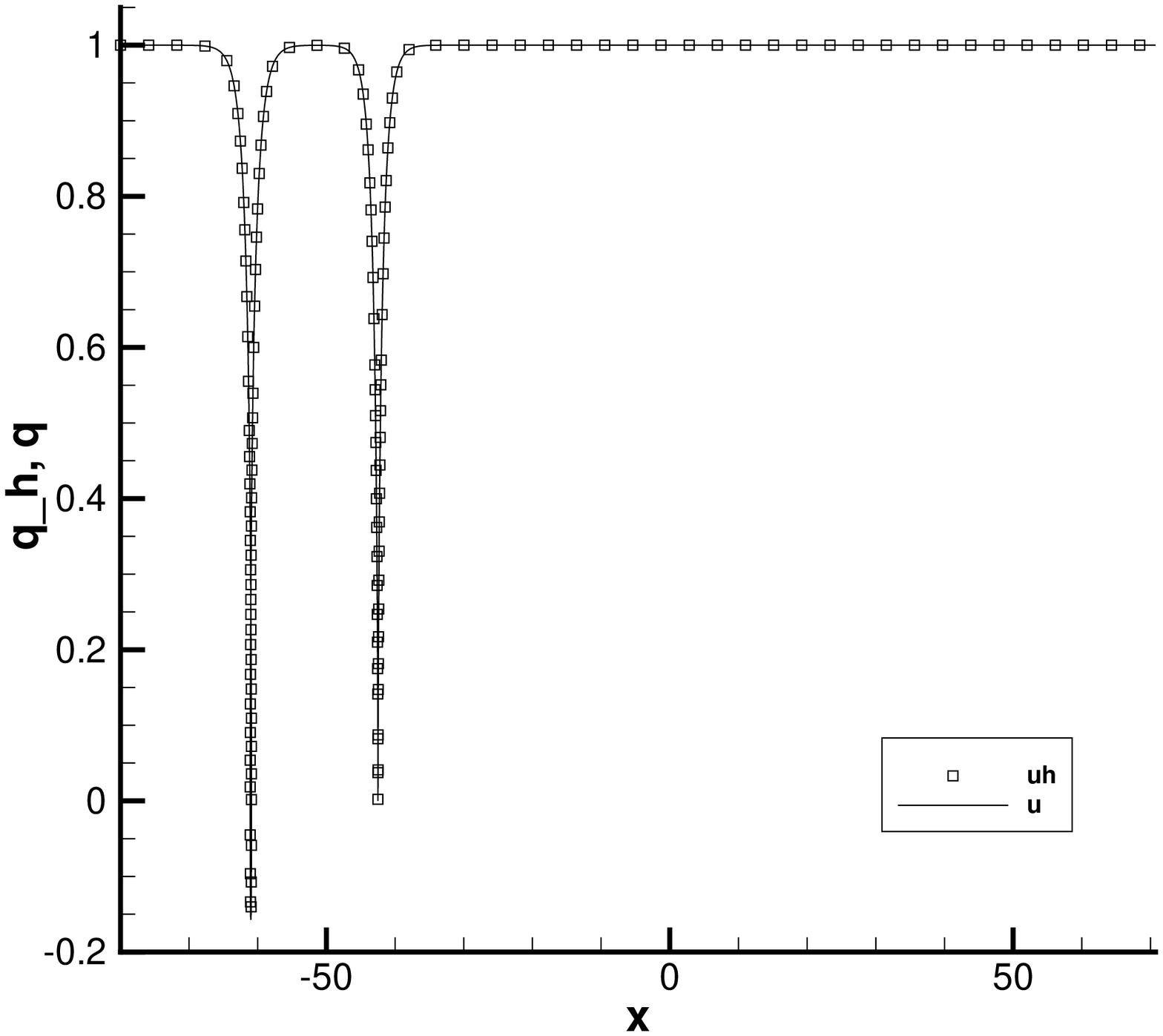}}
  \centerline{(d) t = 40.0}
\end{minipage}
\caption{ \label{fig:reduce2_O_2cuspon_p} Example \ref{ex:loop}, the two-cuspon solution $q$ of the OV system  \eqref{eqn:2Ostrovsky} with the cells $N = 320$, $P^2$ elements: The parameters are $k_1 = 2.0$, $k_2$ = $2.6$, $c = -2.0, \eta_{i0} = - 20k_i. $}
\label{fig:2loop_CD}
\end{figure}

\begin{figure}[!htb]
\begin{minipage}{0.49\linewidth}
  \centerline{\includegraphics[width=1.1\textwidth]{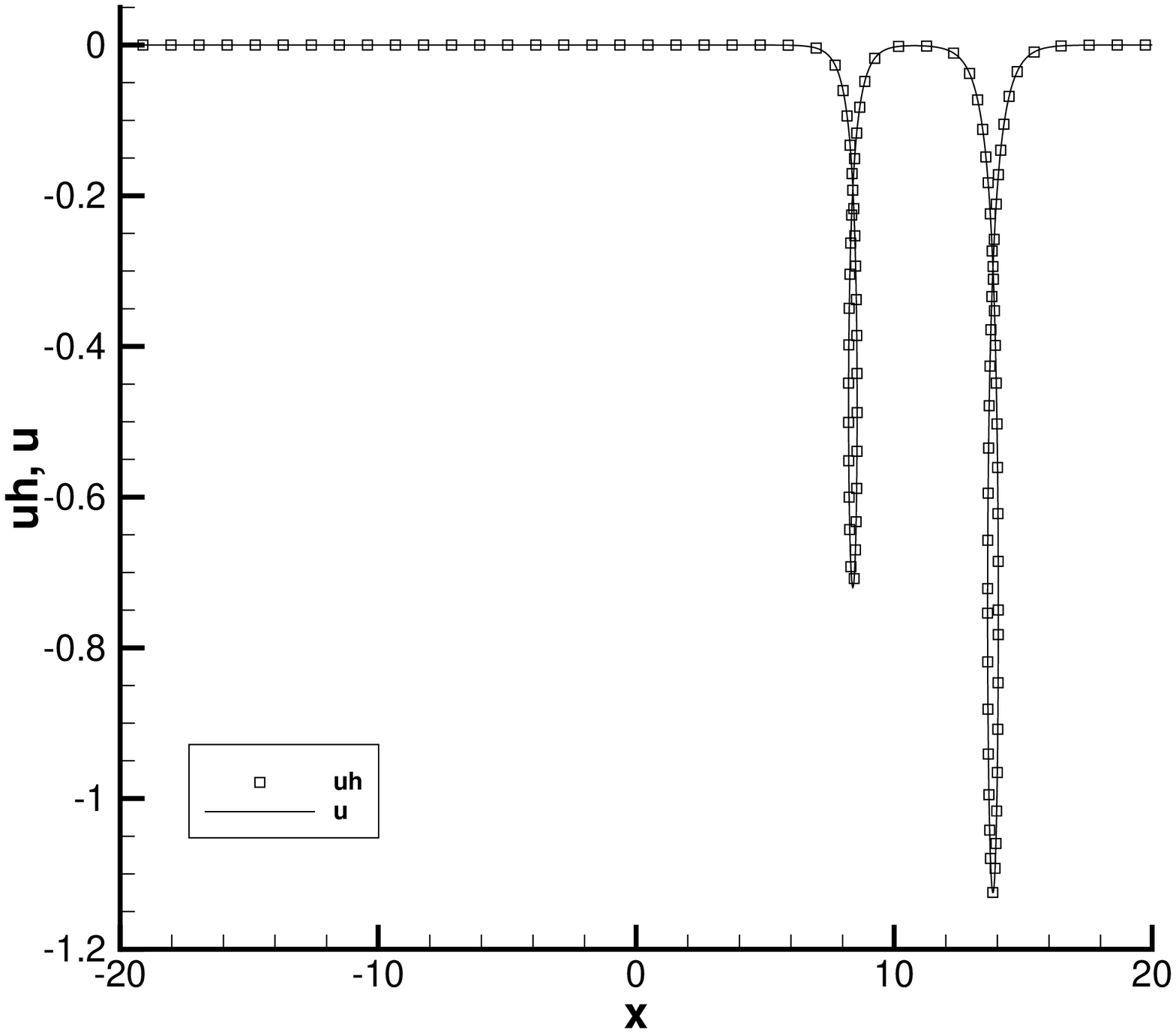}}
  \centerline{(a) t = 0.0}
\end{minipage}
\hfill
\begin{minipage}{0.49\linewidth}
  \centerline{\includegraphics[width=1.1\textwidth]{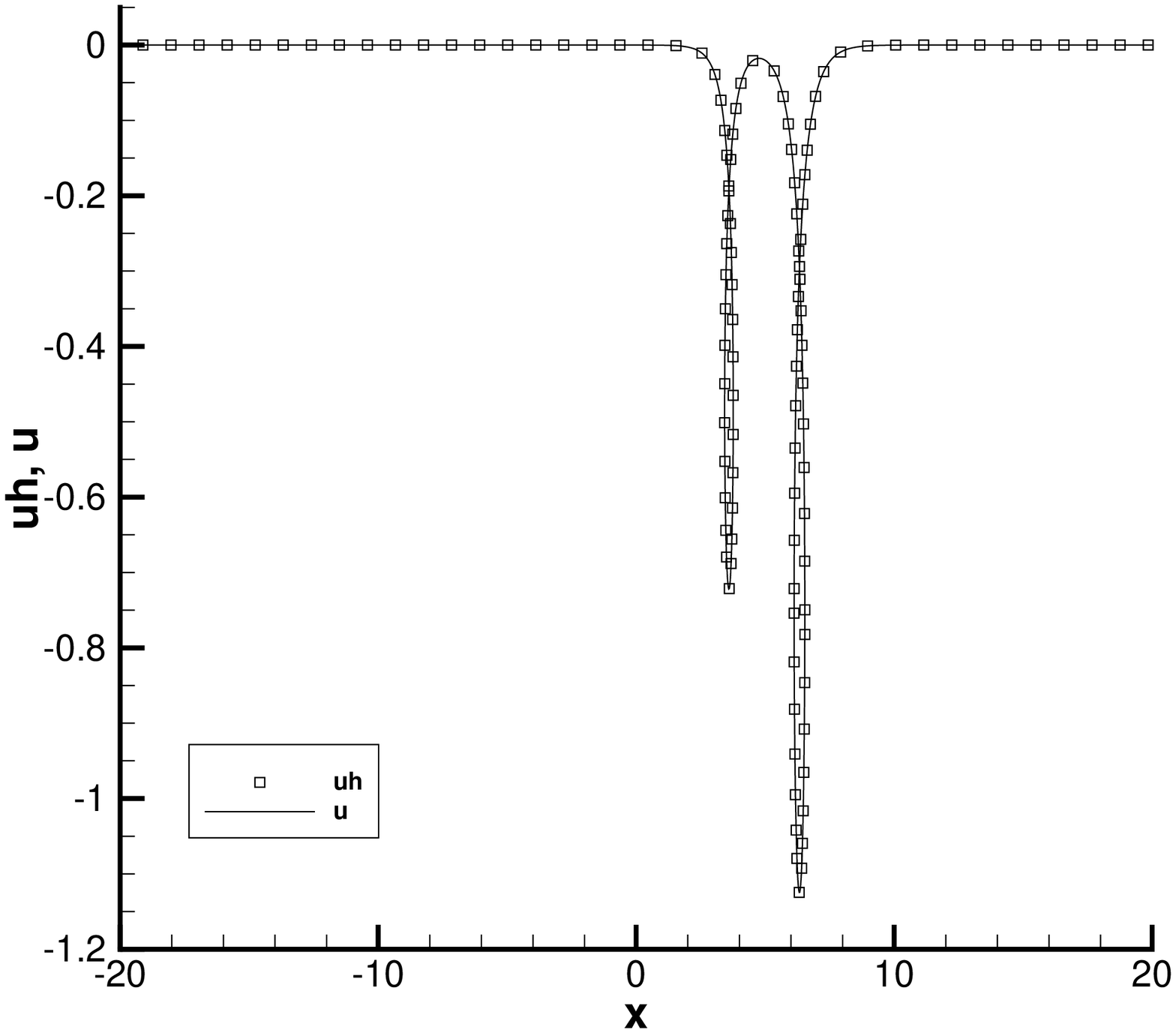}}
  \centerline{(b) t = 10.0}
\end{minipage}
\vfill
\begin{minipage}{0.49\linewidth}
  \centerline{\includegraphics[width=1.1\textwidth]{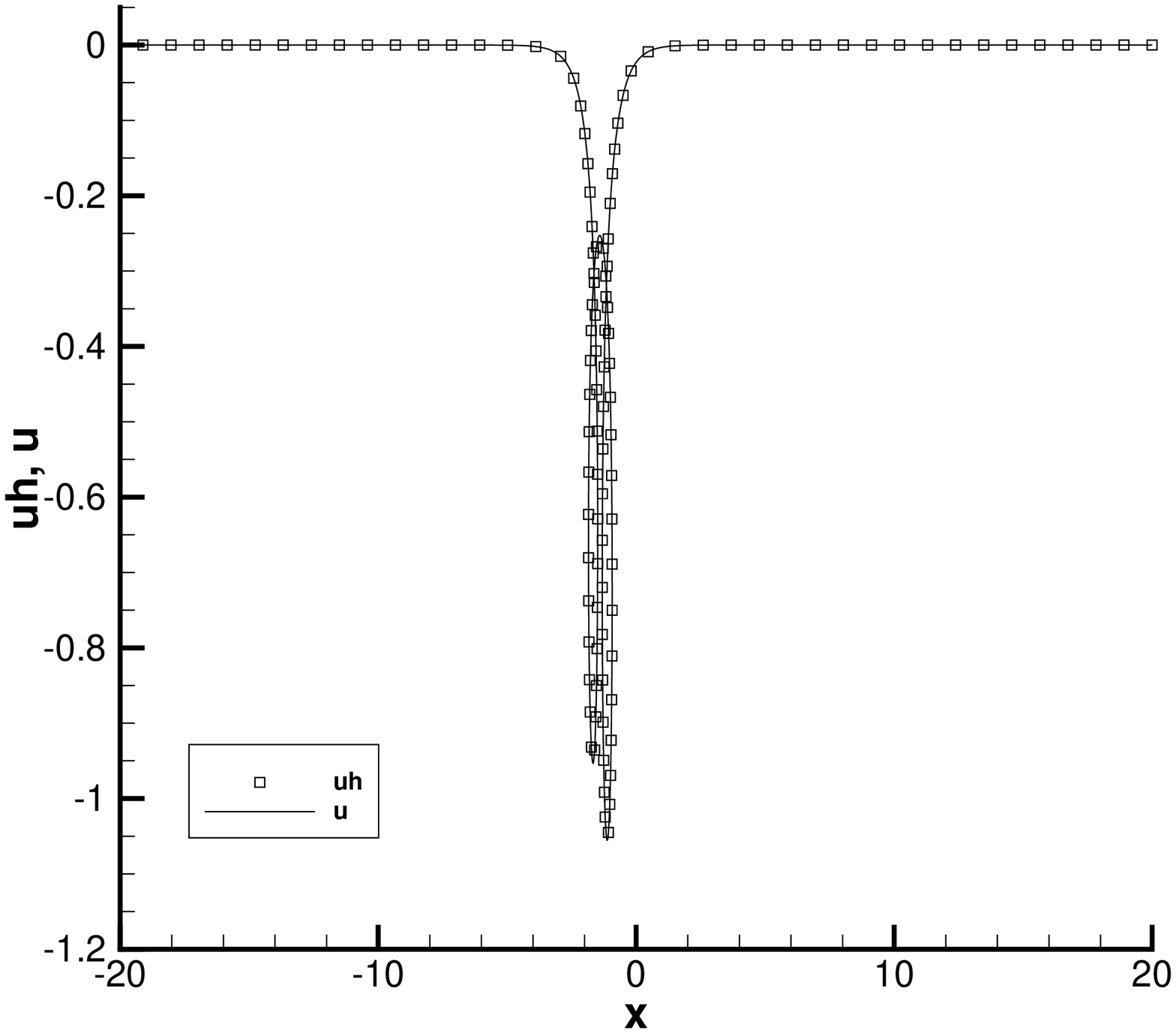}}
  \centerline{(c) t = 20.0}
\end{minipage}
\hfill
\begin{minipage}{0.49\linewidth}
  \centerline{\includegraphics[width=1.1\textwidth]{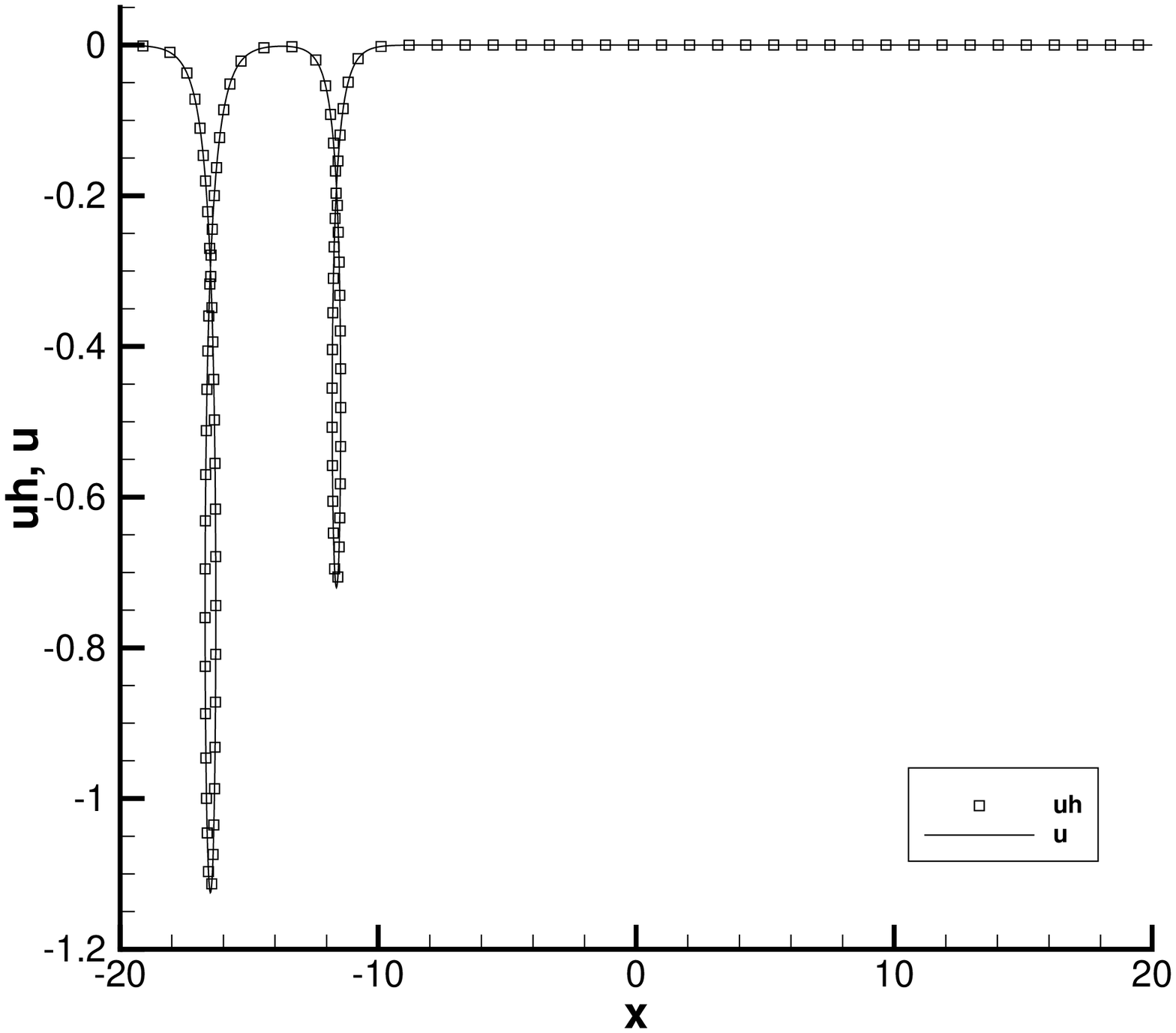}}
  \centerline{(d) t = 40.0}

\end{minipage}
\caption{ \label{fig:reduce_O_2loop} Example \ref{ex:loop}, the two-loop solution $u$ of the OV equation with the cells $N = 320$, $P^2$ elements: The parameters are $k_1 = 1.2$, $k_2$ = $1.5$, $c = 0.0, \eta_{i0} = - 20k_i.$}
\label{fig:2loop_CD}
\end{figure}

\end{example}

\subsection{Conclusion}

In this paper, we presented  the discontinuous Galerkin methods for the OV equation. These methods can be divided into two classes: direct and indirect. Direct methods consist of the energy stable and Hamiltonian conservative DG schemes for the OV equation. The $L^2$ stability and Hamiltonian conservativeness DG schemes are proved, respectively. Based on $L^2$ stability, we also give the suboptimal error estimates of the energy stable DG scheme and the energy stable integration DG scheme. Indirect methods, composed of the DG scheme and the integration DG scheme for the CD system obtain the profile of solutions of the OV equation via the hodograph transformation. Numerical experiments are provided to demonstrate the accuracy and capability of the DG schemes, including shock solution, peakon, cuspon and loop soliton solutions, in addition to smooth solutions.

\end{document}